\newtheorem{Theorem}{Theorem}[section]
\newtheorem{Lemma}[Theorem]{Lemma}
\newtheorem{Corollary}[Theorem]{Corollary}
\newtheorem{Proposition}[Theorem]{Proposition}
\newtheorem{Example}[Theorem]{Example}
\newtheorem{Remark}[Theorem]{Remark}
\begin{document}
\title[On the Yamabe constants of product manifolds]{On the Yamabe constants of product manifolds}

\author[C. Sung]{Chanyoung Sung}
 \address{Dept. of Mathematics Education  \\
          Korea National University of Education \\
          Cheongju, Korea
          }
\email{cysung@kias.re.kr}

\keywords{Yamabe constant, fiberwise symmetrization}

\subjclass[2020]{53C21, 58E99}

\date{}

%\maketitle

\begin{abstract}
By using the fiberwise spherical symmetrization we give a comparison theorem of Yamabe constants on warped products and prove the existence of radially-symmetric Yamabe minimizers on Riemannian manifolds given by products with round spheres.
\end{abstract}

\maketitle

%\tableofcontents

\section{Introduction}
%$g_V\emph{g}_V\textrm{g}_V\textbf{g}_V\textsf{g}_V\texttt{g}_V\textsl{g}_V\textsl{g}_V\textsc{g}_V\textsl{g}_V\textsl{g}_V\frak{g}_V$
%\begin{Remark}
%$g_V\emph{g}_V\textrm{g}_V\textbf{g}_V\textsf{g}_V\texttt{g}_V\textsl{g}_V\textsl{g}_V\textsc{g}_V\textsl{g}_V\textsl{g}_V\frak{g}_V$
%\end{Remark}
On a smooth closed Riemannian manifold $(M ,g)$ of dimension $m\geq 2$, its conformal class $[g]$ is defined as $$\{\phi^2g|\phi:M\stackrel{C^\infty}{\rightarrow}\Bbb R\backslash\{0\}\}$$ and the Yamabe constant $Y(M,[g])$ of $[g]$ is defined as
$$\inf_{\tilde{g}\in [g]}\frac{\int_Ms_{\tilde{g}}\ d\mu_{\tilde{g}}}{(\int_Md\mu_{\tilde{g}})^{\frac{m-2}{m}}}.$$ Here and henceforth
$s_{\tilde{g}}$ denotes the scalar curvature of $\tilde{g}$.
When $m\geq 3$, by writing $\tilde{g}\in [g]$ as $\tilde{g}=f^{\frac{4}{m-2}}g$, $Y(M,[g])$ can be expressed as
$$\inf\{Y_g (f)|f\in L^2_1 (M)\backslash\{0\}\}$$ where the Yamabe functional $Y_g$ is defined by
$$Y_g (f) := \frac{\int_M (a_m|df|_g^2 + s_g f^2) \ d\mu_g }{ {\left( \int_M |f|^{p_m}\ d\mu_g \right) }^{\frac{2}{p_m}}}$$
for $a_m = \frac{4(m-1)}{m-2}$ and $p_m=\frac{2m}{m-2}$.
It is a fundamental result
%proved in several stages by Yamabe \cite{Yamabe}, Trudinger \cite{Trudinger}, Aubin \cite{Aubin} and Schoen \cite{Schoen} is
that the functional $Y_g$ has a (Yamabe) minimizer $f_0$ which is smooth and positive, and the metric $f_0^{\frac{4}{m-2}} g$, called a Yamabe metric, has constant scalar curvature.
%Thus if $g$ is a Yamabe metric of $[g]$, then $Y(M,[g])$ is simply $s_g(\textrm{Vol}_g(M))^{\frac{2}{m}}$.
Moreover any metric of nonpositive constant scalar curvature \cite{Kobayashi} and any Einstein metric \cite{obata} are Yamabe metrics.

Aubin's inequality \cite{Aubin} states that $$Y(M,[g]) \leq Y(S^m,[g_{_{\Bbb S}}])$$
%=m(m-1)V_m^{\frac{2}{m}}$$
where $g_{_{\Bbb S}}$ is the standard round metric of constant curvature 1, so one can take the supremum of Yamabe constants over all conformal classes of $M$. The supremum value is called the Yamabe invariant $Y(M)$ of $M$, and gives a topological invariant of $M$ depending only on the smooth topology of $M$.

%The actual computation of the Yamave invariant is not easy, but there have been much progress in dimension 3 and 4. The tools for computing it in a general dimension have been developed, such as the gluing formulae of Yamabe invariants under the surgery of codimension 3 and more.

We are concerned with the product formulae of Yamabe constant. The exact product formula even in the case of a Riemannian product is tricky because of the nonlinearity of the Euler-Lagrange equation of the Yamabe functional. The first pioneering work was done by Petean :
\begin{Theorem}[Petean \cite{Petean-1}]
Let $(M^m,g)$ be a smooth closed Riemannian $m$-manifold with positive scalar curvature. Then there exists a constant $K>0$ depending only on $(M,g)$ such that for any smooth closed manifold $N$ of dimension $n$,
$$Y(N\times M)\geq K\ Y(S^{m+n}).$$
\end{Theorem}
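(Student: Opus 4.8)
The plan is to bound the \emph{invariant} $Y(N\times M)$ from below by exhibiting a single good conformal class, since $Y(N\times M)=\sup_{[\bar g]}Y(N\times M,[\bar g])$. The natural candidate is a product metric, rescaled so that the (possibly sign-changing) scalar curvature of the $N$-factor becomes negligible. Concretely I would fix any background metric $g_N$ on $N$ and consider $g_\lambda=\lambda^2 g_N\oplus g$ for $\lambda\to\infty$. Writing $d=m+n$, the scalar curvature of the product is $s_{g_\lambda}=\lambda^{-2}s_{g_N}+s_g$, which converges uniformly to $s_g\geq s_0>0$, where $s_0=\min_M s_g$ (positive since $M$ is closed with positive scalar curvature). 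Thus the problem reduces to bounding $\liminf_{\lambda\to\infty}Y(N\times M,[g_\lambda])$ from below, and geometrically $(N\times M,g_\lambda)$ looks locally more and more like the complete product $\mathbb{R}^n\times M$.

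The engine of the lower bound is the sharp Sobolev inequality of Aubin--Hebey type. On a manifold of bounded geometry of dimension $d$ one has
\[
\Big(\int |f|^{p_d}\,d\mu\Big)^{2/p_d}\ \leq\ \frac{a_d}{Y(S^d)}\int|\nabla f|^2\,d\mu\ +\ B\int f^2\,d\mu,
\]
in which the constant $a_d/Y(S^d)$ in front of the gradient term is optimal and coincides with the sharp Euclidean Sobolev constant, because $Y(S^d)$ is exactly the Yamabe constant of flat $\mathbb{R}^d$. I would apply this on $(N\times M,g_\lambda)$: as $\lambda\to\infty$ its curvature and injectivity-radius bounds stabilize to those of $\mathbb{R}^n\times M$, so for large $\lambda$ the inequality holds with a lower-order constant $B=B(M,n)$ essentially that of $\mathbb{R}^n\times M$.

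Combining the two ingredients is then elementary. The numerator of the Yamabe functional for $g_\lambda$ satisfies
\[
\int\big(a_d|\nabla f|^2+s_{g_\lambda}f^2\big)\,d\mu\ \geq\ a_d\int|\nabla f|^2\,d\mu+(s_0-o(1))\int f^2\,d\mu ,
\]
and comparing this term by term against the Sobolev inequality gives $Y_{g_\lambda}(f)\geq\min\{(1-o(1))Y(S^d),\,s_0/B\}$ for every test function $f$. Letting $\lambda\to\infty$ yields $Y(N\times M)\geq\min\{Y(S^d),\,s_0/B\}\geq K\,Y(S^d)$ with $K=\min\{1,\,s_0/(B\,Y(S^d))\}$. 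Conceptually, the sharp gradient constant controls the concentrating (bubbling) behaviour that produces Aubin's upper bound $Y(S^d)$, while the positivity $s_0>0$ of the scalar curvature rules out the cheap spread-out competitor.

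The step I expect to be the genuine obstacle is making $K$ depend only on $(M,g)$. Both $B=B(m+n)$ and $Y(S^{m+n})$ vary with the total dimension $d=m+n$, and since $Y(S^d)$ grows (linearly in $d$) while $N$ is arbitrary, one must show that $s_0/(B(d)\,Y(S^d))$ stays bounded away from $0$ as $d\to\infty$. This amounts to a dimension-uniform control of the lower-order Sobolev constant of $\mathbb{R}^n\times M$ against the growth of $Y(S^d)$, and it is here that the geometry of $(M,g)$ must be used quantitatively; the rest of the argument is soft by comparison.
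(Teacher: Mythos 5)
The paper itself offers no proof of this statement: it is quoted verbatim from Petean \cite{Petean-1}, so your attempt can only be judged on its own merits and against Petean's original argument. On those terms, your first three steps are sound and are indeed in the spirit of \cite{Petean-1}: blowing up the base factor via $g_\lambda=\lambda^2 g_N\oplus g$ makes $s_{g_\lambda}\to s_g\geq s_0>0$ uniformly and gives geometry bounds uniform in $\lambda$; Aubin's $\epsilon$-sharp Sobolev inequality on manifolds of bounded geometry then applies with gradient constant $K(d)^2+\epsilon$, where $K(d)^2=a_d/Y(S^d)$; and the elementary inequality $\frac{a_1+a_2}{b_1+b_2}\geq\min\left(\frac{a_1}{b_1},\frac{a_2}{b_2}\right)$ yields $Y(N\times M)\geq\min\left\{(1-o(1))\,Y(S^d),\ s_0/B\right\}$. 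What this proves, however, is only the weaker statement in which the constant $K$ is allowed to depend on $n=\dim N$ (through $B$ and $Y(S^d)$).

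The gap is exactly the one you flag in your final paragraph, and it is not a technical afterthought to be deferred: it is the theorem. Since $N$ ranges over closed manifolds of \emph{all} dimensions and $Y(S^d)$ grows linearly in $d=m+n$ while $s_0=\min_M s_g$ is fixed, your bound degenerates as $n\to\infty$ unless the lower-order constant $B=B(\epsilon,d,M)$ decays like $1/d$, and Aubin's theorem gives no control whatsoever on how $B_\epsilon$ depends on dimension. Note also that no rescaling can repair this: if the product metric is scaled by $c^2$, both the scalar curvature lower bound and the optimal $B$ scale by $c^{-2}$, so the ratio $s_0/B$ is scale-invariant, exactly as the conformal invariance of $Y$ dictates. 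Producing the required dimension-free estimate --- in effect a lower bound for the Sobolev quotient $\inf_f \int_{\mathbb{R}^n\times M}\left(a_d|\nabla f|^2+s_g f^2\right)d\mu \big/ \|f\|_{p_d}^2$ of the form $K\,Y(S^{m+n})$ with $K$ independent of $n$ --- is precisely the content of \cite{Petean-1}, whose title (``Best Sobolev constants \dots'') reflects that the quantitative, dimension-uniform study of these constants on $\mathbb{R}^n\times M$ is where all the work lies. Your proposal stops at the point where that work begins, so as it stands it establishes only the fixed-dimension version of the theorem.
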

He has also obtained an estimate of $K$ when $M=S^2$ or $S^3$, and $m+n\leq 5$ in cowork with Ruiz \cite{ruiz1, ruiz2}.
The next significant progress toward the product formula is made by Ammann, Dahl, and Humbert :
% whose result has not yet reached the complete form though.
\begin{Theorem}[Ammann, Dahl, and Humbert \cite{ADH}]
Let $(M^m,g)$ and $(N^n,h)$ be smooth closed Riemannian manifolds satisfying $m,n\geq 3$ and $Y(M,[g])\geq 0$.
 Then $$Y(N\times M)\geq a_{m,n} Y(M)^{\frac{m}{m+n}}Y(S^n)^{\frac{n}{m+n}}$$ where $a_{m,n}:=\frac{(m+n)a_{m+n}}{(ma_m)^{\frac{m}{m+n}}(na_{n})^{\frac{n}{m+n}}}$.
If $Y(N,[h])\geq 0$ and $\frac{s_g+s_h}{a_{m+n}}\geq \frac{s_g}{a_m}+\frac{s_h}{a_{n}}$, then $$Y(N\times M,[h+g])\geq a_{m,n}Y(M,[g])^{\frac{m}{m+n}}Y(N,[h])^{\frac{n}{m+n}}.$$
\end{Theorem}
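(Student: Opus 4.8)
The plan is to bound the Yamabe functional $Y_{h+g}(F)$ from below uniformly over all nonzero test functions $F$ and then pass to the infimum. Write $k=m+n$, and fix a smooth $F>0$ on $N\times M$; here $d_MF$ and $d_NF$ denote the $M$- and $N$-directional parts of $dF$. Since the product metric satisfies $s_{h+g}=s_g+s_h$ and $|dF|_{h+g}^2=|d_MF|_g^2+|d_NF|_h^2$, the numerator of $Y_{h+g}(F)$ equals $\int_{N\times M}(a_k|d_MF|^2+a_k|d_NF|^2+(s_g+s_h)F^2)$. Multiplying the curvature hypothesis $\frac{s_g+s_h}{a_k}\ge\frac{s_g}{a_m}+\frac{s_h}{a_n}$ pointwise by $a_kF^2\ge 0$ and substituting yields the key splitting
$$\int_{N\times M}\!\big(a_k|dF|^2+s_{h+g}F^2\big)\ \ge\ \frac{a_k}{a_m}E_M(F)+\frac{a_k}{a_n}E_N(F),$$
where $E_M(F)=\int_{N\times M}(a_m|d_MF|^2+s_gF^2)$ and $E_N(F)=\int_{N\times M}(a_n|d_NF|^2+s_hF^2)$.

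Next I would estimate $E_M$ and $E_N$ slice by slice. For fixed $y\in N$, applying the definition of $Y(M,[g])$ to $x\mapsto F(x,y)$ gives $\int_M(a_m|d_MF|^2+s_gF^2)\,d\mu_g\ge Y(M,[g])\,\Phi(y)^2$, where $\Phi(y):=\|F(\cdot,y)\|_{L^{p_m}(M)}$; integrating in $y$ yields $E_M(F)\ge Y(M,[g])\,\|\Phi\|_{L^2(N)}^2$, and symmetrically $E_N(F)\ge Y(N,[h])\,\|\Psi\|_{L^2(M)}^2$ with $\Psi(x):=\|F(x,\cdot)\|_{L^{p_n}(N)}$ (both steps use $Y(M,[g]),Y(N,[h])\ge 0$). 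Applying the weighted arithmetic--geometric mean inequality with weights $m/k$ and $n/k$ to the two terms $\frac{a_k}{a_m}Y(M,[g])\|\Phi\|_2^2$ and $\frac{a_k}{a_n}Y(N,[h])\|\Psi\|_2^2$ produces, after a short computation, exactly the constant $a_{m,n}=\frac{k\,a_k}{(ma_m)^{m/k}(na_n)^{n/k}}$, leaving
$$\int_{N\times M}\!\big(a_k|dF|^2+s_{h+g}F^2\big)\ \ge\ a_{m,n}\,Y(M,[g])^{\frac{m}{k}}Y(N,[h])^{\frac{n}{k}}\ \|\Phi\|_{L^2(N)}^{\frac{2m}{k}}\|\Psi\|_{L^2(M)}^{\frac{2n}{k}}.$$

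It then remains to prove the purely analytic inequality $\|\Phi\|_{L^2(N)}^{m/k}\|\Psi\|_{L^2(M)}^{n/k}\ge\|F\|_{L^{p_k}(N\times M)}$, and this is where I expect the real work to lie. I would start from the one-variable interpolation $\|F(\cdot,y)\|_{L^{p_k}(M)}\le\Phi(y)^{m/k}\|F(\cdot,y)\|_{L^2(M)}^{n/k}$, valid because $\tfrac{1}{p_k}=\tfrac{m}{k}\tfrac{1}{p_m}+\tfrac{n}{k}\tfrac12$, raise it to the power $p_k$, integrate in $y$, and apply H\"older in $y$ with exponent $r=\frac{k-2}{m}$. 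The exponents are arranged precisely so that the first factor becomes $\|\Phi\|_{L^2(N)}^{2m/(k-2)}$, as desired, while the conjugate exponent turns the residual factor into $\big(\int_N\|F(\cdot,y)\|_{L^2(M)}^{p_n}\,d\mu_h\big)^{1/r'}$. The subtle point, and the main obstacle, is that this residual is a mixed norm in the \emph{opposite} order to $\|\Psi\|_{L^2(M)}$; flipping it is exactly Minkowski's integral inequality $\|F\|_{L^{p_n}_yL^2_x}\le\|F\|_{L^2_xL^{p_n}_y}$, which holds because $p_n\ge 2$. Matching the remaining exponents then yields the claimed inequality, and combining it with the previous display gives $Y_{h+g}(F)\ge a_{m,n}Y(M,[g])^{m/k}Y(N,[h])^{n/k}$; taking the infimum over $F$ completes the proof.
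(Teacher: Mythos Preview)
The paper does not contain a proof of this theorem; it is quoted in the introduction as a result of Ammann, Dahl, and Humbert \cite{ADH} to motivate the paper's own comparison theorem (Theorem~\ref{Yoon}), and no argument for it is given anywhere in the text. So there is nothing to compare your proposal against.

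That said, your argument for the \emph{second} inequality is essentially the original Ammann--Dahl--Humbert proof: split the numerator via the scalar-curvature hypothesis, apply the definition of the Yamabe constant slice by slice in each factor, combine the two resulting terms via weighted AM--GM (which is where the constant $a_{m,n}$ emerges), and control the denominator by the mixed-norm inequality you describe. Your exponent bookkeeping is correct, and the Minkowski step is indeed the right way to flip the order of integration (it needs $p_n\ge 2$, i.e.\ $n\ge 2$, and your H\"older step needs $r=(k-2)/m>1$, i.e.\ $n\ge 3$, matching the hypotheses). One small caveat: you have only addressed the second assertion of the theorem. The first assertion concerns the Yamabe \emph{invariants} $Y(N\times M)$ and $Y(M)$ (suprema over all conformal classes) together with $Y(S^n)$ in place of $Y(N,[h])$; deducing it requires additional work beyond what you have written.
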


While it seems hard to obtain a similar inequality for a general fiber bundle, one may try warped products first. We shall prove :
\begin{Theorem}\label{Yoon}
Let $(M^m,g)$ be a smooth closed Riemannian $m$-manifold with volume $V$ and Ricci curvature $\textrm{Ric}_g \geq m-1$ for $m\geq 2$, and $(N^n,h)$ be a smooth closed Riemannian $n$-manifold. Then for any smooth positive function $\rho$ on $N$
$$Y(N\times M,[h+\rho^2g]) \geq \left(\frac{V}{V_m}\right)^{\frac{2}{m+n}}Y(N\times S^m,[h+\rho^2g_{_{\Bbb S}}])$$ and if the equality holds then $(M,g)$ is Einstein with $\textrm{Ric}_g = m-1$.
\end{Theorem}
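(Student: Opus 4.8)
The plan is to prove the test-function version of the inequality: for every nonzero $F\in L^2_1(N\times M)$ there is a function $F^*$ on $N\times S^m$ with
$$Y_{h+\rho^2g}(F)\ \geq\ \left(\frac{V}{V_m}\right)^{\frac{2}{m+n}}Y_{h+\rho^2g_{_{\Bbb S}}}(F^*),$$
and then to take the infimum over $F$, using $Y_{h+\rho^2g_{_{\Bbb S}}}(F^*)\geq Y(N\times S^m,[h+\rho^2g_{_{\Bbb S}}])$. Since $|dF|\geq |d|F||$, one may assume $F\geq 0$. Writing $k=m+n$, $a_k=\frac{4(k-1)}{k-2}$, $p_k=\frac{2k}{k-2}$, I would define $F^*$ by \emph{fiberwise spherical symmetrization}: for each $y\in N$ let $F^*(y,\cdot)$ be the radially symmetric decreasing rearrangement of $F(y,\cdot)$ onto $S^m$, calibrated so that the normalized distribution functions agree, $\frac{1}{V_m}\mu_{g_{_{\Bbb S}}}(\{F^*(y,\cdot)>t\})=\frac{1}{V}\mu_g(\{F(y,\cdot)>t\})$ for all $t$. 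This is legitimate because $\textrm{Ric}_g\geq m-1$ forces $V\leq V_m$ by Bishop's theorem.

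\emph{The volume and potential terms.} The volume form of $h+\rho^2g$ is $\rho^m\,d\mu_h\,d\mu_g$, and likewise for $g_{_{\Bbb S}}$. Equimeasurability gives fiberwise, hence globally, the exact identity $\int_{N\times M}F^{p_k}\,d\mu_{h+\rho^2g}=\frac{V}{V_m}\int_{N\times S^m}(F^*)^{p_k}\,d\mu_{h+\rho^2g_{_{\Bbb S}}}$, and the same with exponent $2$. For the potential term I would use the warped-product formula $s_{h+\rho^2g}=s_h-2m\frac{\Delta_h\rho}{\rho}-m(m-1)\frac{|d\rho|_h^2}{\rho^2}+\frac{s_g}{\rho^2}$, whose only dependence on the $M$-variable is the term $\frac{s_g}{\rho^2}$ (the $\Delta_h\rho$-convention is irrelevant since those terms are common to both sides). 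As $\textrm{Ric}_g\geq m-1$ gives $s_g\geq m(m-1)=s_{g_{_{\Bbb S}}}$, the $L^2$-equimeasurability upgrades to $\int_{N\times M}s_{h+\rho^2g}\,F^2\,d\mu_{h+\rho^2g}\geq\frac{V}{V_m}\int_{N\times S^m}s_{h+\rho^2g_{_{\Bbb S}}}\,(F^*)^2\,d\mu_{h+\rho^2g_{_{\Bbb S}}}$.

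\emph{The gradient term.} Orthogonality of the warped product splits $|dF|^2_{h+\rho^2g}=|\nabla_NF|_h^2+\rho^{-2}|\nabla_MF|_g^2$. For the fiber part, the classical P\'olya--Szeg\H{o} argument via the coarea formula, powered by the L\'evy--Gromov isoperimetric inequality (valid precisely because $\textrm{Ric}_g\geq m-1$, giving $\textrm{Per}_g(\{F>t\})\geq\frac{V}{V_m}\textrm{Per}_{g_{_{\Bbb S}}}(\{F^*>t\})$), yields $\int_M|\nabla_MF(y,\cdot)|_g^2\,d\mu_g\geq\frac{V}{V_m}\int_{S^m}|\nabla_{S^m}F^*(y,\cdot)|^2\,d\mu_{g_{_{\Bbb S}}}$ for each $y$. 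For the base part I would differentiate the calibration $\mu_{F^*}=\frac{V_m}{V}\mu_F$ in $y$: implicit differentiation gives $(\partial_{y^i}F^*)^2=(\partial_{y^i}\mu_F)^2/(\partial_t\mu_F)^2$ (the ratio $V_m/V$ cancels), and a Cauchy--Schwarz estimate on the level set $\{F=t\}$ with the coarea formula produces $\int_{S^m}(\partial_{y^i}F^*)^2\,d\mu_{g_{_{\Bbb S}}}\leq\frac{V_m}{V}\int_M(\partial_{y^i}F)^2\,d\mu_g$; summing over an $h$-orthonormal frame on $N$ gives the base comparison with the same constant $\frac{V}{V_m}$. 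Hence the whole numerator satisfies $\textrm{Num}(F)\geq\frac{V}{V_m}\textrm{Num}(F^*)$ while the denominator scales by $(\frac{V}{V_m})^{2/p_k}$; since $1-\frac{2}{p_k}=\frac{2}{k}=\frac{2}{m+n}$, dividing yields the displayed inequality.

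\emph{Main obstacle and rigidity.} The genuine difficulty is not the algebra but making the parametrized rearrangement rigorous: justifying the a.e.\ differentiability of the distribution function in both $t$ and $y$, the coarea identities, and the base-derivative estimate above, which requires approximating $F$ by functions that are fiberwise Morse with no flat parts and passing to the limit. The L\'evy--Gromov inequality is the one deep external input; the rest is coarea bookkeeping. For the equality statement I would take $F$ to be the positive smooth Yamabe minimizer on $(N\times M,h+\rho^2g)$; equality in the theorem forces equality in each of the three comparisons, in particular in $\int_M s_g\,F^2\,d\mu_g\geq m(m-1)\int_M F^2\,d\mu_g$, whence $s_g\equiv m(m-1)$ on all of $M$ since $F>0$. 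Combined with $\textrm{Ric}_g\geq m-1$ this makes the nonnegative tensor $\textrm{Ric}_g-(m-1)g$ have vanishing trace, so it vanishes identically; that is, $(M,g)$ is Einstein with $\textrm{Ric}_g=m-1$.
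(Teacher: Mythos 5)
Your proposal is correct and follows essentially the same route as the paper: fiberwise spherical symmetrization calibrated by the volume ratio $V/V_m$, with the fiber-gradient comparison powered by the L\'evy--Gromov isoperimetric inequality (valid since $\mathrm{Ric}_g\geq m-1$), the base-gradient comparison by coarea plus Cauchy--Schwarz, exact equimeasurability for the volume term and the fiber-constant part of the potential, $s_g\geq m(m-1)$ for the remaining term, the exponent identity $1-\frac{2}{p_{m+n}}=\frac{2}{m+n}$, and the trace argument on $\mathrm{Ric}_g-(m-1)g\geq 0$ for rigidity. The only differences are bookkeeping: the paper symmetrizes onto the same-volume sphere $S^m_V$ and rescales the base metric to $\mathbf{h}_V$ (quoting the two gradient comparisons from \cite{Sung} rather than re-deriving them), and it runs both the inequality and the equality case through generic-fiberwise Morse $C^2$-approximations of the Yamabe minimizer instead of applying the comparisons to the minimizer directly --- precisely the limiting argument you acknowledge as the main technical obstacle, since the minimizer itself need not be fiberwise Morse.
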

In this paper $V_m$ denotes the volume of $(S^m,g_{_{\Bbb S}})$, and $S^m_V:=(S^m,\textsl{g}_V)$ denotes an $m$-sphere with a metric of constant curvature and volume $V$.

The above spherical symmetrization technique can be also used to prove the existence of a symmetric minimizer of the Yamabe functional on manifolds with factors of $S^m$. A symmetric minimizer is meant by a minimizer invariant under an isometry subgroup of the given manifold, and it plays an important role not only in the equivariant Yamabe problem but also in studying the ordinary Yamabe problem. Unlike the first eigenfunctions, Yamabe minimizers are solutions of a nonlinear PDE so that such a symmetric minimizer may not exist if Yamabe constant is positive. Special interests have been given to manifolds with factors of $S^m$ \cite{Petean-3, jimmy-hector}, and the radial symmetry of a function on $S^m$  means its invariance under the standard action of $O(m)$ around the axis passing through the two poles. We shall prove :
\begin{Theorem}\label{newsfactory}
For a smooth closed Riemannian manifold $(N,h)$ and any positive constants $V_1,\cdots,V_k$, there exists a minimizer for the Yamabe functional of $(N\times \prod_{i=1}^kS^{m_i},h+\textsl{g}_{V_1}+\cdots +\textsl{g}_{V_k})$, which is radially symmetric in each factor of sphere $S^{m_i}$.
\end{Theorem}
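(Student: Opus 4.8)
The plan is to combine the fiberwise spherical symmetrization with an equivariant concentration--compactness argument. Write $X:=N\times\prod_{i=1}^kS^{m_i}$ with the product metric $g:=h+\textsl{g}_{V_1}+\cdots+\textsl{g}_{V_k}$ and put $D:=\dim X=n+\sum_{i=1}^km_i$. Let the compact group $G:=\prod_{i=1}^kO(m_i)$ act by isometries, fixing $N$ and rotating each $S^{m_i}$ about the axis through its poles; a function is radially symmetric in every sphere factor exactly when it is $G$-invariant. I would reduce the theorem to two statements: (A) the infimum of $Y_g$ over the $G$-invariant functions $L^2_1(X)^G$ equals $Y(X,[g])$, and (B) this infimum is attained inside $L^2_1(X)^G$. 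Granting both, the attaining function is a genuine Yamabe minimizer, hence smooth and positive by the standard regularity theory, and $G$-invariant by construction.

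For (A), I would symmetrize one sphere factor at a time. Given $f\in L^2_1(X)$, for each fixed value of the remaining coordinates replace the restriction of $f$ to $S^{m_i}$ by its symmetric decreasing rearrangement about a fixed pole $P_i$, and iterate over $i=1,\dots,k$ to produce a $G$-invariant $f^\ast$. Each such fiberwise rearrangement is measure preserving, so $\int_X|f^\ast|^{p_D}\,d\mu_g=\int_X|f|^{p_D}\,d\mu_g$; and since the scalar curvature of the product is $s_g=s_g(y)$, constant along every sphere factor, while rearrangement preserves $\int f^2$ on each fiber, the curvature term $\int_X s_g(f^\ast)^2\,d\mu_g$ is unchanged as well. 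The decisive point is the Dirichlet integral: the fiberwise spherical P\'olya--Szeg\H{o} inequality underlying Theorem~\ref{Yoon} guarantees that rearranging the $i$-th factor does not increase $\int_X|df|_g^2\,d\mu_g$, where it is crucial that the energy in the transverse directions (along $N$ and the other spheres) also does not increase --- a property one reads off by realizing spherical symmetrization as a limit of polarizations. Hence $Y_g(f^\ast)\le Y_g(f)$ for every $f$, which gives (A).

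For (B), I would run a concentration--compactness argument in the symmetric class. Take a minimizing sequence $f_j\in L^2_1(X)^G$ with $\|f_j\|_{p_D}=1$, $f_j\ge 0$, each fiberwise symmetric decreasing about the poles; it is $L^2_1$-bounded, so $f_j\rightharpoonup f_\infty$ after passing to a subsequence. By $G$-invariance any loss of compactness is concentration along a $G$-orbit. Transverse to an orbit of positive dimension $d$ the problem is strictly subcritical, the transverse critical exponent $\tfrac{2(D-d)}{D-d-2}$ exceeding $p_D$, so no concentration occurs there; the only orbits of dimension $0$ are the fixed points of $G$, the points whose every sphere coordinate is a pole, and since the $f_j$ are fiberwise decreasing, mass can concentrate only at such a multi-pole point. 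Blowing up there on $T_pX\cong\mathbb{R}^D$, the limiting $G$-invariant bubble minimizes the Euclidean Yamabe quotient among $G$-invariant functions; but the fully radial Aubin--Talenti bubble is $G$-invariant, so the concentration threshold is exactly $Y(S^D)$. As (A) gives $\inf_{L^2_1(X)^G}Y_g=Y(X,[g])\le Y(S^D)$ by Aubin's inequality, with equality only when $(X,[g])$ is conformally the round sphere, concentration is impossible whenever $Y(X,[g])<Y(S^D)$, and then $f_j\to f_\infty$ strongly in $L^{p_D}$, proving (B). The borderline $Y(X,[g])=Y(S^D)$ forces $X$ to be conformally $S^D$, which for a nontrivial product cannot happen; it occurs only when $N$ is a point and $k=1$, so that $X=S^{m_1}$ and the constant function is already a radial minimizer.

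The step I expect to be hardest is the sharp Dirichlet comparison in (A): proving that rearranging a single spherical factor does not increase the energy stored in the transverse directions, which is what makes the full P\'olya--Szeg\H{o} inequality on the product available. Coupled to this is the precise identification of the concentration threshold at the multi-pole fixed points as $Y(S^D)$, since it is exactly this value that links the symmetrization in (A) to the strict Aubin inequality and thereby closes the compactness argument in (B).
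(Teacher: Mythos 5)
Your overall architecture is sound and the theorem does follow from it, but your route through the compactness step is genuinely different from the paper's, and it is worth seeing what each costs. The paper does not symmetrize an arbitrary minimizing sequence and does not run any concentration--compactness argument. Instead it starts from a Yamabe minimizer $\varphi$ (whose existence is already known from the resolution of the Yamabe problem), approximates it in $C^2$ by generic-fiberwise Morse functions $\varphi_k$ (Proposition \ref{morse-1}), and applies the fiberwise symmetrization $(\varphi_k)_{\bar{*}}$ of \cite{Sung} one sphere factor at a time, carrying the previously gained invariance along as a group action on the base (this is the content of the Lemma, applied inductively, and uses the equivariance property (\ref{Ocean-nuclear-1})). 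The decisive trick is that symmetrization preserves the sup-norm, and the $\varphi_k$ are uniformly $C^0$-bounded because they are $C^2$-close to the fixed smooth $\varphi$; hence the symmetrized minimizing sequence is dominated by a constant function in $L^{p}$, and the elementary Proposition \ref{yam} (weak convergence plus dominated convergence) already yields strong $L^2_1$ convergence to a minimizer inheriting the symmetry. Concentration is ruled out for free by domination, with no need for the strict inequality $Y(X,[g])<Y(S^D)$, no orbit analysis, and no bubbling threshold. Your approach pays for not starting from a minimizer: you must invoke the Aubin--Schoen strict inequality (equality only for the conformal round sphere, which as you correctly note is excluded for a nontrivial product by cohomology) and a Lions-type argument; note also that once your step (A) identifies the invariant infimum with $Y(X,[g])$, plain (non-equivariant) concentration--compactness suffices, so your orbit-dimension discussion is superfluous --- and in fact slightly wrong when some $m_i=1$, since non-polar $O(1)$-orbits on $S^1$ are $0$-dimensional two-point orbits (the threshold conclusion survives because their concentration cost is $2^{2/(m+n)}Y(S^D)$).

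The one place where your proposal has a real burden of proof is exactly where you locate it: the fiberwise P\'olya--Szeg\H{o} inequality with control of the \emph{transverse} energy, i.e.\ the analogue of (\ref{JHK1}) and (\ref{JHK2}) for general $L^2_1$ functions. You assert it ``by realizing spherical symmetrization as a limit of polarizations''; this is a legitimate and, for round-sphere fibers, workable strategy (fiberwise two-point rearrangements do not increase the Dirichlet integral of the product and cap symmetrization is their limit), but it is precisely the nontrivial ingredient that the paper imports from \cite{Sung}, where it is proved only for generic-fiberwise Morse functions --- which is why the paper needs the $C^2$-approximation step at all. So either you prove the polarization statement in the product setting, or you restrict, as the paper does, to a class of functions for which the symmetrization inequalities are available and arrange your minimizing sequence to lie in that class.
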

%More applications are discussed in Section \ref{Apps}.
In the last two sections we shall provide different proofs of some well-known theorems by applying our result. One may also apply our method to the estimation of the CR Yamabe constant \cite{Sung-Takeuchi}, but we won't discuss it here.

\section{Preliminaries}

\subsection{Conventions and Notations}

Here are collected some common conventions and notations to be used throughout this paper.
First of all, every manifold is assumed to be smooth and connected unless otherwise stated.
%When $X$ is a $k$-dimensional Riemannian manifold (possibly with boundary), $\mu(X)$ denotes the $k$-dimensional volume of $X$. In a measure space $(X,\mu)$, $\mu(S)$ of a measurable subset $S$ denotes the measure of $S$, and we shall denote the characteristic function of a measurable subset $S$ by $\chi_{_S}$.

In case that a Lie group $G$ has a smooth left (or right) action on a smooth manifold $X$, it induces a left (or right) action on $C^\infty(X)$ respectively defined by
\begin{eqnarray}\label{BHCP}
(g\cdot F)(x):=F(g^{-1}\cdot x)\ \ \ \ \textrm{or}\ \ \ \ (F\cdot g)(x):=F(x\cdot g^{-1})
\end{eqnarray}
for $F\in C^\infty(X)$, $g\in G$ and $x\in X$.
%We also let $|G|$ denote the cardinality of $G$ for finite $G$ and if $G$ is infinite, it denotes the volume $\mu(G)$ of $G$ with respect to a Riemannian metric under consideration.

On a smooth Riemannian manifold $(M,g)$, $L_1^p(M)$ for $p\geq 1$ is the completion of $$\{f\in C^\infty(M)| \ ||f||_{L_1^p}:=(\int_M (|f|^p+|df|_g^p)\ d\mu_g)^{\frac{1}{p}}<\infty\}$$ with respect to $L_1^p$-norm $||\cdot||_{L_1^p}$.
%Given a function $F$ on a product manifold $N\times X$, we shall regard $F$ as a family of functions on $X$ parametrized by $N$, in which case we denote $F|_{\{s\}\times X}$ for $s\in N$ by $F_s$ for notational simplicity. We alert the reader that it should not be misunderstood as the abbreviated notation of the partial derivative $\frac{\partial F}{\partial s}$.

%Given a map $f$, its inverse image of a set $S$ is denoted by $f^{-1}S$, and the open ball of radius $\epsilon>0$ and center $p$ in any metric space is denoted by $B_\epsilon(p)$.

\subsection{Convergence of minimizing sequence}

The following is proved in \cite{Sung-weyl}, but we quote its proof for the reader.
\begin{Proposition}\label{yam}
On a smooth closed Riemannian $m$-manifold $(M,g)$ for $m\geq 3$, let $$\{\varphi_i\in L_1^2(M)|\int_M|\varphi_i|^{p_m}d\mu_g=1, i\in \Bbb N\}$$ be a minimizing sequence for $Y_g$, i.e. $\lim_{i\rightarrow \infty}Y_g(\varphi_i)= Y(M,[g])$. If
there exists $\tilde{\varphi}\in L^{p_m}(M)$ such that $|\varphi_i|\leq \tilde{\varphi}$ for all $i$, then there exists a Yamabe minimizer $\varphi$ to which a subsequence of $\{\varphi_i\}$ converges in $L_1^2$-norm.
\end{Proposition}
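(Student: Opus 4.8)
The plan is to run the direct method of the calculus of variations, with the domination hypothesis supplying exactly the compactness that the critical Sobolev exponent $p_m$ otherwise denies. First I would show that $\{\varphi_i\}$ is bounded in $L_1^2(M)$. Since $\int_M|\varphi_i|^{p_m}\,d\mu_g=1$ and $Y_g(\varphi_i)\to Y(M,[g])$, the numerators $\int_M(a_m|d\varphi_i|_g^2+s_g\varphi_i^2)\,d\mu_g$ are bounded. As $(M,g)$ is closed, $s_g$ is bounded, and H\"older's inequality controls $\int_M\varphi_i^2\,d\mu_g$ by $(\int_M|\varphi_i|^{p_m}\,d\mu_g)^{2/p_m}(\int_M d\mu_g)^{1-2/p_m}$, which is bounded; rearranging then bounds $\int_M|d\varphi_i|_g^2\,d\mu_g$. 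Hence $\{\varphi_i\}$ is bounded in the Hilbert space $L_1^2(M)$, so after passing to a subsequence reflexivity gives a weak limit $\varphi\in L_1^2(M)$, and the Rellich--Kondrachov theorem (the subcritical embedding $L_1^2\hookrightarrow L^2$ being compact) yields strong convergence $\varphi_i\to\varphi$ in $L^2(M)$ and, along a further subsequence, pointwise a.e.

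The decisive step is to prove $\int_M|\varphi|^{p_m}\,d\mu_g=1$, so that $\varphi\neq 0$ and is an admissible competitor. This is precisely where the hypothesis enters: from $|\varphi_i|\le\tilde\varphi$ and pointwise convergence we get $|\varphi_i|^{p_m}\to|\varphi|^{p_m}$ a.e.\ with the integrable majorant $\tilde\varphi^{p_m}\in L^1(M)$, so the Dominated Convergence Theorem gives $\int_M|\varphi_i|^{p_m}\,d\mu_g\to\int_M|\varphi|^{p_m}\,d\mu_g=1$. Without such a majorant the critical exponent would permit mass to concentrate and one could only conclude $\int_M|\varphi|^{p_m}\,d\mu_g\le 1$; this is exactly the obstruction at the heart of the Yamabe problem, and overcoming it is the point of the domination assumption.

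It then remains to identify $\varphi$ as a minimizer and to upgrade the convergence. Weak lower semicontinuity of $f\mapsto\int_M|df|_g^2\,d\mu_g$, together with the strong $L^2$ convergence handling the $\int_M s_g\varphi^2\,d\mu_g$ term, gives
$$\int_M(a_m|d\varphi|_g^2+s_g\varphi^2)\,d\mu_g\le\liminf_i\int_M(a_m|d\varphi_i|_g^2+s_g\varphi_i^2)\,d\mu_g,$$
whence, dividing by the denominator which now tends to $1$, $Y_g(\varphi)\le Y(M,[g])$; the reverse inequality is the definition of the infimum, so $Y_g(\varphi)=Y(M,[g])$ and $\varphi$ is a Yamabe minimizer. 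Finally, since both numerators and denominators converge, the Dirichlet energies $\int_M|d\varphi_i|_g^2\,d\mu_g$ converge to $\int_M|d\varphi|_g^2\,d\mu_g$; combined with weak convergence $d\varphi_i\rightharpoonup d\varphi$ in $L^2$, this convergence of norms forces strong $L^2$ convergence of the gradients, and hence strong $L_1^2$ convergence of $\varphi_i\to\varphi$. The main obstacle throughout is the failure of compactness of $L_1^2\hookrightarrow L^{p_m}$ at the critical exponent, which is precisely what the majorant $\tilde\varphi$ is designed to bypass.
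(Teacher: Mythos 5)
Your proposal is correct and follows essentially the same route as the paper's proof: boundedness of the sequence in $L_1^2$, extraction of a weakly convergent subsequence with strong $L^2$ and a.e.\ convergence, dominated convergence via the majorant $\tilde{\varphi}$ to keep the constraint $\int_M|\varphi|^{p_m}d\mu_g=1$, weak lower semicontinuity of the Dirichlet energy to identify $\varphi$ as a minimizer, and finally convergence of gradient norms plus weak convergence to upgrade to strong $L_1^2$ convergence. The only cosmetic differences are that you invoke Rellich--Kondrachov and weak lower semicontinuity by name where the paper writes out the Cauchy--Schwarz argument, and you handle the $\int_M s_g\varphi_i^2\,d\mu_g$ term by strong $L^2$ convergence where the paper uses dominated convergence; both are equivalent.
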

\begin{proof}
%Theorem 3.1 of our preceding work \cite{Sung-weyl}, but we quote it here for the reader.
Set $a:=a_m$ and $p:=p_m$.
%We may assume that $$\int_M|\varphi_i|^{p}d\mu_g=1$$ for all $i\in \Bbb N$ by considering $\frac{\varphi_i}{||\varphi_i||_{_{L^{p}}}}$.

From
\begin{eqnarray}\label{jongguk-0}
\lim_{i\rightarrow \infty}\int_M(a|d\varphi_i|^2+s_g\varphi_i^2)\ d\mu_g&=&Y(M,[g]),
\end{eqnarray}
there exists an integer $n_0$ such that if $i\geq n_0$ then
\begin{eqnarray*}
Y(M,[g])+1 &\geq& \int_M(a|d\varphi_i|^2+s_g\varphi_i^2)\ d\mu_g\\ &\geq& \int_M(a|d\varphi_i|^2-|\min_Ms_g|\ \varphi_i^2)\ d\mu_g\\ &\geq& \int_Ma|d\varphi_i|^2 d\mu_g-C(\int_M|\varphi_i|^p\ d\mu_g)^{\frac{2}{p}}
\end{eqnarray*}
for a constant $C>0$, implying that $$\sup_i\int_M|d\varphi_i|^2d\mu_g\leq C'$$ for a constant $C'>0$, and hence $\{\varphi_i|i\in \Bbb N\}$ is a bounded subset of $L_1^2(M)$.

Thus there exists $\varphi\in L_1^2(M)$ and a subsequence converging to $\varphi$ weakly in $L_1^2$, strongly in $L^2$, and pointwisely almost everywhere.
%\footnote{For the strong convergence in $L^2$, we used the Rellich-Kondrakov theorem which holds still on orbifolds. It can be easily derived by using the partition of unity. For a proof one may refer to \cite{Falsi}.}
By abuse of notation we let $\{\varphi_i\}$ be the subsequence.

Owing to that $|\varphi_i|\leq \tilde{\varphi}\in L^p(M)$, we can apply Lebesgue's dominated convergence theorem to obtain
\begin{eqnarray}\label{jongguk-1}
\int_Ms_g\varphi_i^2\ d\mu_g\rightarrow \int_Ms_g\varphi^2d\mu_g
\end{eqnarray}
 and
\begin{eqnarray}\label{jongguk-2}
\int_M|\varphi_i|^p d\mu_g\rightarrow \int_M|\varphi|^pd\mu_g,
\end{eqnarray}
 and hence $\int_M|\varphi|^pd\mu_g=1$ and there must exist $\lim_{i\rightarrow \infty}\int_M|d\varphi_i|^2d\mu_g$ by (\ref{jongguk-0}).

%In the same way as the previous proposition, the weak convergence $\varphi_i\rightarrow\varphi$ in $L_1^2$ and the strong convergence in $L^2$ dictate that
By the weak convergence $\varphi_i\rightarrow\varphi$ in $L_1^2$,
$$\int_M|d\varphi|^2d\mu_g+\int_M\varphi^2\ d\mu_g = \lim_{i\rightarrow \infty}(\int_M\langle d\varphi_i,d\varphi\rangle d\mu_g+\int_M \varphi_i\varphi\  d\mu_g),$$ so
\begin{eqnarray}\label{jongguk-3}
\int_M|d\varphi|^2d\mu_g &=& \lim_{i\rightarrow \infty}\int_M\langle d\varphi_i,d\varphi\rangle d\mu_g\\ &\leq& \limsup_{i\rightarrow \infty}(\int_M|d\varphi_i|^2d\mu_g    )^{\frac{1}{2}}(\int_M|d\varphi|^2d\mu_g)^{\frac{1}{2}}\nonumber\\ &=& \lim_{i\rightarrow \infty}(\int_M|d\varphi_i|^2d\mu_g    )^{\frac{1}{2}}(\int_M|d\varphi|^2d\mu_g)^{\frac{1}{2}}\nonumber
\end{eqnarray}
implying that
$$\int_M|d\varphi|^2d\mu_g \leq \lim_{i\rightarrow \infty}\int_M|d\varphi_i|^2d\mu_g.$$

Therefore
$$Y(M,[g])=\lim_{i\rightarrow \infty}\frac{\int_M(a|d\varphi_i|^2+s_g\varphi_i^2)\ d\mu_g}{(\int_M|\varphi_i|^p\ d\mu_g)^{\frac{2}{p}}}\geq
\frac{\int_M(a|d\varphi|^2+s_g\varphi^2)\ d\mu_g}{(\int_M|\varphi|^p\ d\mu_g)^{\frac{2}{p}}},$$ and hence $\varphi$ must be a Yamabe minimizer.

By (\ref{jongguk-1}) and (\ref{jongguk-2}), this implies that $$\lim_{i\rightarrow \infty}\int_M|d\varphi_i|^2d\mu_g=\int_M|d\varphi|^2d\mu_g.$$
Combined with (\ref{jongguk-3}), it gives
\begin{eqnarray*}
\lim_{i\rightarrow \infty}\int_M|d\varphi-d\varphi_i|^2d\mu_g&=&\lim_{i\rightarrow \infty}\int_M(|d\varphi|^2-2\langle d\varphi_i,d\varphi\rangle+|d\varphi_i|^2)\ d\mu_g=0,
\end{eqnarray*}
completing the proof that $\varphi_i\rightarrow\varphi$ strongly in $L_1^2$.

\end{proof}

\subsection{Fiberwise symmetrization}\label{elle}

For a Morse function $f$ on a smooth closed Riemannian $m$-manifold $(M^m,g)$ with volume $V$, the spherical rearrangement of $f$ is the radially-symmetric continuous function $f_*:S^m_V\rightarrow \Bbb R$  such that $\{y\in S^m_V| f_*(y)<t \}$ for any $t\in (\min(f), \max(f)]$ is the geodesic ball centered at the south pole $q_0$ with the same volume as $\{x\in M| f(x)<t \}$. By the construction $f_*$ is piecewise-smooth and satisfies that for any $p> 0$ and $(a,b)\subset \Bbb R$
\begin{eqnarray}\label{servant-now}
\int_{\{x\in M| a<f(x)<b \}}f(x)^p\ d\mu_g=\int_{\{y\in S^m_{V}|a<f_{*}(y)<b\}} f_{*}(y)^p\ d\mu_{\textsl{g}_V}
\end{eqnarray}
where $(\cdot)^p$ means $|\cdot|^p$ if $p$ is not an integer.
% where $S^m_V:=(S^m,\textsl{g}_V)$ denotes an $m$-sphere with a metric of constant curvature and volume $V$. We shall adopt these notations throughout the paper, and by the radial symmetry of a function on $S^m$ we always mean its invariance under the standard action of $O(m+1)$ around the axis passing through the two poles.

The natural question is whether this method of symmetrization can be done fiberwisely in case of a fiberwise Morse function on a fibred manifold. But it may not be possible to approximate a smooth function on a fiberd manifold by a fiberwise Morse function. Instead we consider a generic-fiberwise Morse function. Given a product manifold $N\times M$, we shall always consider $N$ as base and $M$ as fiber. A generic-fiberwise Morse function on $N\times M$ is a smooth function $F$ for which there exists an open dense subset $N_0$ of $N$ with  measure-zero complement such that $F$ restricted to each fiber over $N_0$ is a Morse function on $M$. We shall take the largest possible $N_0$, i.e. the union of all such $N_0$, and denote it by $N_0(F)$. In \cite{Sung}, we proved
\begin{Proposition}\label{morse-1}
Let $M$ and $N$ be smooth closed manifolds and $C^\infty(N\times M, \Bbb R)$ be endowed with $C^2$-topology. If $F\in C^\infty(N\times M, \Bbb R)$, then for any open neighborhood $U\subseteq C^\infty(N\times M, \Bbb R)$ of $F$ there exist $\tilde{F}\in U$ which is a generic-fiberwise Morse function on $M$.

Under the further assumption that a compact Lie group $G$ acts on $N$ and $M$ smoothly and $F:N\times M\rightarrow \Bbb R$ is invariant under this action, one can choose $\tilde{F}$ such that the $G$-orbit of $\tilde{F}$ also remains in $U$. If the action on $M$ is trivial, then $\tilde{F}$ can be chosen to be $G$-invariant.
\end{Proposition}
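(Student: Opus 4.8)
The plan is to realize $\tilde{F}$ as a single constant linear perturbation of $F$ built from an embedding of the fiber, and then to upgrade the classical per-fiber genericity of Morse functions to a statement about almost every fiber by a Fubini argument over the base. First I would fix a smooth embedding $\iota=(\iota_1,\dots,\iota_k):M\hookrightarrow\Bbb R^k$ (Whitney) and, for $a\in\Bbb R^k$, set $F_a(x,y):=F(x,y)+\langle a,\iota(y)\rangle$. For each fixed $x\in N$ the fiber restriction $F_a(x,\cdot)=F(x,\cdot)+\langle a,\iota(\cdot)\rangle$ is the standard linear family, and the classical argument (Sard's theorem applied to the gradient/Gauss map, whose critical values are exactly the parameters producing a degenerate critical point) shows that $B^x:=\{a\in\Bbb R^k\mid F_a(x,\cdot)\ \text{is not Morse on }M\}$ has Lebesgue measure zero. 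Since the perturbation term is independent of $x$, its $C^2$-norm on $N\times M$ is bounded by $|a|\,\|\iota\|_{C^2}$, so $F_a\in U$ once $|a|$ is small.

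Next I would run Fubini on $B:=\{(x,a)\in N\times \Bbb R^k\mid F_a(x,\cdot)\ \text{is not Morse}\}$. Because Morse functions form a $C^2$-open set on the compact fiber $M$ and the map $(x,a)\mapsto F_a(x,\cdot)\in C^2(M)$ is continuous, $B$ is closed, hence measurable. Restricting to a ball $\{|a|<\delta\}$ and using that every $x$-slice $B^x$ is null, Tonelli's theorem gives $\int_{|a|<\delta}\mathrm{meas}(B_a)\,da=\int_N\mathrm{meas}(B^x\cap\{|a|<\delta\})\,dx=0$, where $B_a:=\{x\in N\mid F_a(x,\cdot)\ \text{is not Morse}\}$; hence $B_a$ is null for almost every $a$ with $|a|<\delta$. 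Fixing such an $a$ and setting $\tilde{F}:=F_a\in U$, the set $N_0(\tilde{F})=N\setminus B_a$ is open (again by $C^2$-openness of the Morse condition together with continuity in $x$), while its complement $B_a$ is closed and null, hence nowhere dense; thus $N_0(\tilde{F})$ is open and dense with measure-zero complement, so $\tilde{F}$ is a generic-fiberwise Morse function.

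For the equivariant refinements I would first observe that, since $G$ acts by diffeomorphisms respecting the product, $g\cdot\tilde{F}$ restricted to the fiber over $x$ equals $\tilde{F}$ on the fiber over $g^{-1}x$ precomposed with a diffeomorphism of $M$; as the Morse property is diffeomorphism-invariant, $N_0(g\cdot\tilde{F})=g\cdot N_0(\tilde{F})$ and the whole orbit consists of generic-fiberwise Morse functions. To force the orbit to stay inside $U$, I would use that $F$ is $G$-fixed and $G$ is compact to replace $U$ by the $G$-invariant open neighborhood $U':=\{H\mid g\cdot H\in U\ \text{for all }g\in G\}$ of $F$, then choose $\tilde{F}\in U'$ by the previous paragraph, giving $g\cdot\tilde{F}\in U'\subseteq U$. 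When the action on $M$ is trivial, a constant $a$ already yields $F_a(g\cdot x,y)=F(g\cdot x,y)+\langle a,\iota(y)\rangle=F(x,y)+\langle a,\iota(y)\rangle=F_a(x,y)$, so $\tilde{F}=F_a$ is automatically $G$-invariant.

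The main obstacle is the passage from per-fiber to fiberwise genericity: one must produce a \emph{single} perturbation that works simultaneously for almost every fiber, which is exactly what interchanging the two null-set conclusions via Fubini delivers. The point that makes this interchange legitimate, and that also secures the required openness and density of $N_0(\tilde{F})$, is that the exceptional set $B$ is not merely null but closed; this in turn rests on the $C^2$-openness of the Morse condition on the compact fiber and the continuity of $(x,a)\mapsto F_a(x,\cdot)$.
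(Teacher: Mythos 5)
There is no proof in this paper to compare yours against: Proposition \ref{morse-1} is quoted from \cite{Sung} (``In \cite{Sung}, we proved \dots''), and the present text reproduces no argument for it. Judged on its own terms, your proof is correct and complete. The fiber-by-fiber input is the classical Guillemin--Pollack lemma that the linear perturbations $\langle a,\iota(\cdot)\rangle$ of a fixed smooth function on a compact embedded $M\subset\Bbb R^k$ are Morse for almost every $a$; the structural point that carries the whole argument, and which you make explicit, is that the bad set $B\subset N\times\Bbb R^k$ is closed, by $C^2$-openness of the Morse condition on the compact fiber and continuity of $(x,a)\mapsto F_a(x,\cdot)\in C^2(M)$. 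Closedness is what simultaneously legitimizes the Tonelli interchange and upgrades the conclusion from a null complement to an open dense $N_0$ with null complement, since a closed null set is nowhere dense. The equivariant parts are also right: $U':=\{H\mid g\cdot H\in U\ \textrm{for all}\ g\in G\}$ is open by compactness of $G$ together with joint continuity of the $G$-action in the $C^2$-topology (a tube-lemma argument worth spelling out in one line), and when $G$ acts trivially on $M$ your perturbation, being constant in the base variable, is automatically invariant, so no averaging over $G$ is needed. For comparison, the standard route in the family-Morse-theory literature (and presumably in \cite{Sung}) is parametric or jet transversality for the vertical differential $d^M F$ viewed as a section of the vertical cotangent bundle, followed by Sard's theorem applied to the projection of the fiberwise critical locus to $N$; your embedding-plus-Fubini argument is a more elementary packaging of the same circle of ideas, and its specific feature that the perturbation does not depend on the base point is exactly what makes both equivariant refinements fall out for free.
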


%Combining this with certain isoperimetric inequality, we obtained
%\begin{Theorem}[\cite{Sung}]
\begin{Theorem}
Let $(M^m,g)$ with volume $V$ and $(N^n,h)$ be smooth closed Riemannian manifolds. If $F:N\times M\stackrel{C^\infty}{\rightarrow} \Bbb R$ is a generic-fiberwise Morse function, then there exists a Lipschitz continuous function $F_{\bar{*}}\in L_1^p(N\times S^m_V)$ for any $p\geq 1$ which is smooth on an open dense subset with measure-zero complement such that for any $s\in N_0(F)$ $$F_{\bar{*}}|_{\{s\}\times S^m_V}=(F|_{\{s\}\times M})_*.$$
In addition, if a Lie group $G$ acts on $(M,g)$ isometrically and on $N$ both by smooth left actions, then for any $\mathfrak{g}\in G$
\begin{eqnarray}\label{Ocean-nuclear-1}
(\mathfrak{g}\cdot F)_{\bar{*}}=\mathfrak{g}\cdot F_{\bar{*}}
\end{eqnarray}
and the analogous statement holds for a right action too,
%\begin{eqnarray}\label{Ocean-nuclear}
%(F\cdot \mathfrak{g})_{\bar{*}}=F_{\bar{*}}\cdot \mathfrak{g}
%\end{eqnarray}
where the induced $G$ action on $N\times S^m_V$ acts trivially on $S^m_V$.

Moreover if Ricci curvature $Ric_g\geq m-1$, then
\begin{eqnarray}\label{JHK1}
\int_M| d^M F|^2_g\ d\mu_g\geq {\left( \frac{V}{V_m} \right) }^{\frac{2}{m}}\int_{S^m_{V}}| d^S F_{\bar{*}}|_{\textsl{g}_V}^2d\mu_{\textsl{g}_V}
\end{eqnarray}
\begin{eqnarray}\label{JHK2}
\int_M| d^N F|^2_h\ d\mu_g \geq  \int_{S^m_{V}}| d^{N} F_{\bar{*}}|^2_h \ d\mu_{\textsl{g}_V}.
\end{eqnarray}
%at any point of $N_0(F)$.
\end{Theorem}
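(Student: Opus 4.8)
The plan is to construct $F_{\bar{*}}$ explicitly from the fiberwise distribution function and then verify the regularity, the equivariance, and the two gradient inequalities one at a time. For $s\in N_0(F)$ put $\mu_s(t):=\Vol_g\{x\in M\,|\,F(s,x)<t\}$ and let $v(r)$ denote the volume of the geodesic ball of radius $r$ about the south pole $q_0$ of $S^m_V$; writing $r(y):=d_{\textsl{g}_V}(q_0,y)$ I would set
$$F_{\bar{*}}(s,y):=\inf\{t\,|\,\mu_s(t)\geq v(r(y))\}.$$
By construction this restricts on each fiber over $N_0(F)$ to $(F|_{\{s\}\times M})_*$ and obeys the equimeasurability (\ref{servant-now}). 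Writing $\hat F(s,\cdot)$ for the monotone rearrangement of $F(s,\cdot)$ onto $([0,V],\mathrm{Leb})$, so that $F_{\bar{*}}(s,y)=\hat F(s,v(r(y)))$, the $L^\infty$-nonexpansiveness of monotone rearrangement gives $\|\hat F(s,\cdot)-\hat F(s',\cdot)\|_\infty\leq\|F(s,\cdot)-F(s',\cdot)\|_\infty$, which together with the smoothness of $F$ makes $F_{\bar{*}}$ Lipschitz in $s$ uniformly in $y$; combined with the piecewise-smoothness of each fiber rearrangement it follows that $F_{\bar{*}}$ is globally Lipschitz, hence lies in $L_1^p(N\times S^m_V)$ for every $p\geq1$, and is smooth off the measure-zero set consisting of the fiber critical locus together with $(N\setminus N_0(F))\times S^m_V$. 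Equivariance (\ref{Ocean-nuclear-1}) is then immediate: for $\mathfrak{g}\in G$ acting isometrically on $M$, the fiber map $x\mapsto F(\mathfrak{g}^{-1}s,\mathfrak{g}^{-1}x)$ has the same distribution function as $x\mapsto F(\mathfrak{g}^{-1}s,x)$, so their rearrangements agree, and since $G$ acts trivially on $S^m_V$ this yields $(\mathfrak{g}\cdot F)_{\bar{*}}=\mathfrak{g}\cdot F_{\bar{*}}$; the right-action case is identical.

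For (\ref{JHK1}) I fix $s\in N_0(F)$ and argue fiberwise, abbreviating $f:=F(s,\cdot)$, $f_*:=F_{\bar{*}}(s,\cdot)$ and $\mu:=\mu_s$. The coarea formula together with the Cauchy--Schwarz inequality on each regular level set gives
$$\int_M|d^Mf|_g^2\,d\mu_g=\int_{\Bbb R}\Big(\int_{\{f=t\}}|df|\,d\mathcal{H}^{m-1}\Big)\,dt\geq\int_{\Bbb R}\frac{\big(\mathcal{H}^{m-1}(\{f=t\})\big)^2}{\mu'(t)}\,dt,$$
while for the radial monotone $f_*$, whose level sets are the geodesic spheres realizing the isoperimetric profile $I_{S^m_V}$ of $S^m_V$, the identical computation is an equality
$$\int_{S^m_V}|d^Sf_*|_{\textsl{g}_V}^2\,d\mu_{\textsl{g}_V}=\int_{\Bbb R}\frac{I_{S^m_V}(\mu(t))^2}{\mu'(t)}\,dt.$$
Since $Ric_g\geq m-1$, the L\'evy--Gromov comparison of $(M,g)$ with the unit sphere, rescaled so that $\textsl{g}_V=(V/V_m)^{2/m}g_{_{\Bbb S}}$, yields $I_M(v)\geq(V/V_m)^{1/m}I_{S^m_V}(v)$ for all $v\in[0,V]$; as $\mathcal{H}^{m-1}(\{f=t\})\geq I_M(\mu(t))$, inserting these bounds into the two displays proves (\ref{JHK1}).

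For (\ref{JHK2}) I exploit that the one-dimensional rearrangement $F(s,\cdot)\mapsto\hat F(s,\cdot)$ is nonexpansive in $L^2$, i.e. $\|\hat f-\hat g\|_{L^2[0,V]}\leq\|f-g\|_{L^2(M)}$. Choosing an $h$-orthonormal frame $\{e_j\}$ of $T_sN$ and applying this with $f=F(s+te_j,\cdot)$ and $g=F(s,\cdot)$, dividing by $t$ and letting $t\to0$, gives $\|\partial_{e_j}\hat F(s,\cdot)\|_{L^2[0,V]}\leq\|\partial_{e_j}F(s,\cdot)\|_{L^2(M)}$ for almost every $s$. Because the map $y\mapsto v(r(y))$ pushes $d\mu_{\textsl{g}_V}$ forward to Lebesgue measure on $[0,V]$ and $\partial_{e_j}F_{\bar{*}}(s,y)=\partial_{e_j}\hat F(s,v(r(y)))$, the left-hand side equals $\int_{S^m_V}|\partial_{e_j}F_{\bar{*}}|^2\,d\mu_{\textsl{g}_V}$ and the right-hand side equals $\int_M|\partial_{e_j}F|^2\,d\mu_g$; summing over $j$ gives (\ref{JHK2}).

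The main obstacle is the regularity asserted in the first paragraph: making precise how the fiberwise rearrangement varies across the base, especially near the degenerate locus $N\setminus N_0(F)$ and across fiber critical values where Morse theory degenerates. This is exactly the point at which the generic-fiberwise Morse hypothesis furnished by Proposition \ref{morse-1} is indispensable, and it demands the most care; by contrast the two gradient inequalities, once stated fiberwise, reduce to the classical P\'olya--Szeg\H{o} and contraction arguments above.
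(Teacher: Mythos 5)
First, a point of comparison: this theorem is not proved in the paper at all --- it is stated as a quotation of results from \cite{Sung}, where the fiberwise symmetrization machinery is developed --- so your proposal can only be judged on its own merits. On those merits, the analytic core is sound. The construction of $F_{\bar{*}}$ through the fiberwise distribution functions $\mu_s$, the equivariance (\ref{Ocean-nuclear-1}) (an isometry of $(M,g)$ preserves each fiberwise distribution function, so the fiberwise rearrangements coincide), and both gradient inequalities are correct. For (\ref{JHK1}), your fiberwise P\'olya--Szeg\H{o} argument (coarea plus Cauchy--Schwarz on level sets) combined with L\'evy--Gromov, including the correctly scaled comparison $I_M(v)\geq (V/V_m)^{1/m}I_{S^m_V}(v)$, is exactly the method used in the $N=\mathrm{point}$ case by \cite{Ilias}, \cite{Petean}, \cite{Gamera} (cf.\ Remark \ref{unicef}). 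For (\ref{JHK2}), the $L^2$-nonexpansiveness of monotone rearrangement (a consequence of equimeasurability and Hardy--Littlewood) applied to difference quotients in base directions is a clean and valid argument; the Rademacher/Fubini care needed to pass to the a.e.-$s$ derivative is routine, and a.e.\ $s$ suffices for every application of (\ref{JHK2}) made in this paper.

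The genuine gap --- which you flag yourself but do not close --- is the regularity assertion. Lipschitz continuity of $F_{\bar{*}}$ in the fiber direction does \emph{not} follow from ``piecewise-smoothness of each fiber rearrangement'': writing $t=F_{\bar{*}}(s,y)$, one has $|d^SF_{\bar{*}}|(y)=A(r(y))/\mu_s'(t)$, where $A(r)$ is the area of the geodesic sphere of radius $r$ in $S^m_V$, and this can a priori blow up wherever the level sets of $F|_{\{s\}\times M}$ have small $(m-1)$-volume; piecewise smoothness gives no bound uniform in $s$ and $y$. The fix requires an isoperimetric input valid \emph{without} any curvature hypothesis (the Lipschitz claim precedes the assumption $Ric_g\geq m-1$): on any closed $M$ there is $c_M>0$ with $I_M(v)\geq c_M\,I_{S^m_V}(v)$, and since $\mu_s'(t)\geq \mathcal{H}^{m-1}(\{F(s,\cdot)=t\})/\sup|d^MF|\geq I_M(\mu_s(t))/\sup|d^MF|$ while $A(r(y))=I_{S^m_V}(\mu_s(t))$, one gets $|d^SF_{\bar{*}}|\leq c_M^{-1}\sup|d^MF|$ uniformly over the fibers; this combines with your $L^\infty$-nonexpansiveness bound in the base direction and density of $N_0(F)$ to give the global Lipschitz estimate. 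Similarly, smoothness on an open dense subset with measure-zero complement is asserted rather than proved: you still need smooth dependence of $(s,t)\mapsto\mu_s(t)$ away from fiber critical values (implicit function theorem over $N_0(F)$, using that the fibers there are Morse) and an argument that the exceptional set --- the union over $s$ of the finitely many critical-level geodesic spheres together with $(N\setminus N_0(F))\times S^m_V$ --- is closed and of measure zero. These points are fillable, but as written they are missing, and they constitute precisely the content of the quoted theorem beyond the two integral inequalities.
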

In the above, we expressed the exterior derivative $d$ on $N\times M$ as $d^N+d^M$ where $d^N$ and $d^M$ denote the exterior derivative on each $N\times \{pt\}$ and each $\{pt\}\times M$ respectively, and similarly $d$ on $N\times S^m_V$ is split as $d^N+d^S$.

\section{Yamabe constant comparison and Proof of Theorem \ref{Yoon}}

When $m+n=2$, $N$ and $M$ must be a point and $S^2$ respectively.
The 2-sphere has the unique conformal class and its Yamabe constant is $8\pi$ by the Gauss-Bonnet theorem.
The desired inequality follows from $V\leq V_2$ which is the consequence of the Bishop-Gromov inequality.

We now consider the case of $m+n\geq 3$.
%This proof is almost parallel to that of Proposition \ref{Yoon-1}, so we shall use the same notations.
Set $a :=a_{m+n}$ and $p:=p_{m+n}$. By $|*|_\zeta$ we shall mean the norm of $*$ with respect to a metric $\zeta$. Let's define  $$\textbf{h}_V:=\left(\frac{V}{V_m}\right)^{\frac{2}{m}}h$$ so that
\begin{eqnarray}\label{JHK0}
\left( \frac{V_m}{V} \right)^{\frac{n}{m}}\int_{N}f\ d\mu_{\textbf{h}_V}=\int_Nf\ d\mu_h, \ \ \ \ \ \  \ \ \ \left(\frac{V}{V_m}\right)^{\frac{2}{m}}| d^{N}f|_{\textbf{h}_V}^2=| d^{N}f|_{h}^2
\end{eqnarray}
for any smooth $f:N\rightarrow \Bbb R$.

Let $\epsilon\ll 1$ be any small positive number and $\varphi: N\times M \rightarrow \Bbb R$ be a Yamabe minimizer of $(N\times M,h+\rho^2g)$.
%such that $\int_{N\times M}|\varphi|^p\ d\mu_{h+\rho^2g}=1$.
Take a smooth generic-fiberwise Morse function $\tilde{\varphi} :N\times M \rightarrow \Bbb R$ which is $C^2$-close enough to $\varphi$ so that
$$|Y_{h+\rho^2g} (\varphi)-Y_{h+\rho^2g}(\tilde{\varphi})|< \epsilon.$$

Using the curvature formula \cite{ON} of a warped product, and inequalities (\ref{JHK1}, \ref{JHK2}),
\begin{eqnarray}\label{Bjoring}
Y_{h+\rho^2g}(\tilde{\varphi}) &=&\frac{\int_{N\times M} \left[ a| d\tilde{\varphi}|_{h+\rho^2g}^2 + (s_h+\frac{1}{\rho^2}s_g+\frac{2m}{\rho}\Delta_h \rho-\frac{m(m-1)}{\rho^2}|d^N \rho|_h^2) \tilde{\varphi}^2\right] d\mu_{h+\rho^2g}}{ {\left(\int_{N\times M}
|\tilde{\varphi}|^p\ d\mu_{h+\rho^2g}\right) }^{\frac{2}{p}}}\nonumber\\
&\geq& \frac{\int_{N}\rho^m\int_M \left[ a(\rho^{-2}|d^M \tilde{\varphi}|_g^2+| d^N \tilde{\varphi}|_h^2) +(s_h+\cdots) \tilde{\varphi}^2\right] d\mu_gd\mu_h}{ {\left(\int_{N}\rho^{m}\int_M |\tilde{\varphi}|^p\ d\mu_gd\mu_h\right)}^{\frac{2}{p}}}\\
&\geq& \frac{\int_{N}\rho^m\int_{S^m_{V}} \left[ a(\frac{V}{V_m})^{\frac{2}{m}}(\rho^{-2}|d^S \tilde{\varphi}_{\bar{*}}|_{\textsl{g}_V}^2+|d^N \tilde{\varphi}_{\bar{*}}|_{\textbf{h}_V}^2) +(s_h+\cdots) \tilde{\varphi}_{\bar{*}}^2 \right] d\mu_{\textsl{g}_V}d\mu_{h}}{ {\left(\int_{N}\rho^{m}\int_{S^m_{V}}|\tilde{\varphi}_{\bar{*}}|^p\  d\mu_{\textsl{g}_V}d\mu_{h} \right) }^{\frac{2}{p}}},\nonumber
\end{eqnarray}
where $\cdots$ abbreviates $$\frac{m(m-1)}{\rho^2}+\frac{2m}{\rho}\Delta_h \rho-\frac{m(m-1)}{\rho^2}|d^N \rho|_h^2,$$
and $\int_M(s_h+\cdots) \tilde{\varphi}^2\ d\mu_g=\int_{S^m_V}(s_h+\cdots ) \tilde{\varphi}_{\bar{*}}^2\ d\mu_{\textsl{g}_V}$ is allowed due to (\ref{servant-now}) and the fact that $(s_h+\cdots )$ is constant in each fiber $\{s\}\times M$.

Applying $(\frac{V}{V_m})^{\frac{2}{m}}s_{\textbf{h}_V}=s_h$,  $(\frac{V}{V_m})^{\frac{2}{m}}\Delta_{\textbf{h}_V}=\Delta_h$, and (\ref{JHK0}),  the above estimation further leads to
\begin{eqnarray*}%\label{Pelopon}
&=&
\frac{\left( \frac{V_m}{V} \right)^{\frac{n}{m}}\int_{N}\rho^{m}\int_{S^m_{V}}(\frac{V}{V_m})^{\frac{2}{m}}\left[ a(\rho^{-2}|d^S \tilde{\varphi}_{\bar{*}}|_{\textsl{g}_V}^2 +| d^{N} \tilde{\varphi}_{\bar{*}}|_{\textbf{h}_V}^2)+(s_{\textbf{h}_V}+**) \tilde{\varphi}_{\bar{*}}^2\right] d\mu_{\textsl{g}_V}d\mu_{\textbf{h}_V}}{\left(\left( \frac{V_m}{V} \right) ^{\frac{n}{m}}\int_{N}\rho^{m}\int_{S^m_V}|\tilde{\varphi}_{\bar{*}}|^p\ d\mu_{\textsl{g}_V}d\mu_{\textbf{h}_V}\right)^{\frac{2}{p}}}
\\ &=& {\left( \frac{V}{V_m} \right) }^{\frac{2}{m+n}}Y_{\textbf{h}_V+\rho^2\textsl{g}_V} (\tilde{\varphi}_{\bar{*}} ) \\
&\geq& {\left( \frac{V}{V_m} \right)}^{\frac{2}{m+n}}Y(N\times S^m,[\textbf{h}_V+\rho^2\textsl{g}_V])\\
&=&  {\left( \frac{V}{V_m} \right)}^{\frac{2}{m+n}}Y(N\times S^m,[h+\rho^2g_{_{\Bbb S}}])
\end{eqnarray*}
where $**$ abbreviates
$$\left(\frac{V}{V_m}\right)^{\frac{2}{m}}\frac{m(m-1)}{\rho^2}+\frac{2m}{\rho}\Delta_{\textbf{h}_V} \rho-\frac{m(m-1)}{\rho^2}|d^N \rho|_{\textbf{h}_V}^2$$ so that
\begin{eqnarray*}
s_{\textbf{h}_V}+** &=& s_{\textbf{h}_V}+\frac{s_{\textsl{g}_V}}{\rho^2}+\frac{2m}{\rho}\Delta_{\textbf{h}_V} \rho-\frac{m(m-1)}{\rho^2}|d^N \rho|_{\textbf{h}_V}^2\\ &=& s_{\textbf{h}_V+\rho^2\textsl{g}_V}.
\end{eqnarray*}
This proves the inequality.
%and the first inequality is not an equality only because of $$\frac{m(m-1)}{\rho^2}\geq

Now let's consider the case when the equality holds. In that case, as $\tilde{\varphi}\rightarrow \varphi$ in $C^2$-norm,
$$Y_{h+\rho^2g}(\tilde{\varphi})\rightarrow Y(N\times M,[h+\rho^2g])=\left( \frac{V}{V_m}\right)^{\frac{2}{m+n}}Y(N\times S^m,[h+\rho^2g_{_{\Bbb S}}]).$$
This forces that $s_g\geq m(m-1)$ must be equal to $m(m-1)$, for otherwise the inequality in (\ref{Bjoring}) would be far from the equality for any $\tilde{\varphi}$ sufficiently $C^2$-close to $\varphi$. Combined with the fact that $Ric_g\geq m-1$, it follows that $Ric_g=m-1$.

\begin{Remark}\label{unicef}
When $N$ is a point, this inequality has been obtained by S. Ilias \cite{Ilias}, J. Petean \cite{Petean}, and N. Gamera, S. Eljazi, and H. Guemri \cite{Gamera} independently by using analogous methods. Another noteworthy fact in this case is :

When $N$ is a point (and $\rho\equiv 1$), the equality of the above theorem  holds if and only if $(M,g)$ is Einstein with $Ric_g=m-1$.

We only need to show the if part. If $Ric_g=m-1$, then $g$ is a Yamabe metric and hence
\begin{eqnarray*}
Y(M,[g])&=& m(m-1)V^{\frac{2}{m}}\\
&=& {\left( \frac{ V}{V_m} \right) }^{\frac{2}{m}} Y(S^m,[g_{_{\Bbb S}}])\\ &=& {\left( \frac{ V}{V_m} \right) }^{\frac{2}{m}} Y(S^m,[\textsl{g}_V]).
\end{eqnarray*}
%Conversely if $Y(M,[g]) ={\left( \frac{ V}{V_m} \right) }^{\frac{2}{m}} Y(S^m,[\textsl{g}_V])$, then there should exist $\tilde{\varphi}$ in the above proof such that $Y_{h+\rho^2g}(\tilde{\varphi})$ gets arbitrarily close to ${\left( \frac{V}{V_m} \right)}^{\frac{2}{m+n}}Y(N\times S^m,[h+\rho^2g_{_{\Bbb S}}])$ in the above proof while it is $C^2$-close to $\varphi$. Then from the first inequality of the above computation, $s_g\geq m(m-1)$ must be equal to $m(m-1)$. Combined with the fact that $Ric_g\geq m-1$, it follows that $Ric_g=m-1$.

%From the proof, it also follows that for a round sphere $(S^m, g_0)$ if a conformal metric $fg_0$ has constant curvature then so is $F_{\bar{*}}g_0$ for the spherical rearrangement $F_{\bar{*}}$ of $f$.

\end{Remark}

\section{Symmetric minimizers and Proof of Theorem \ref{newsfactory}}

Let's consider the case when a Lie group $G$ acts on $N$ from the left. Regarding $S^m$ as an unit sphere in $\Bbb R^{m+1}$, it has a standard $O(m)$-action leaving the last coordinate fixed so that a function on $S^m$ is invariant under this $O(m)$-action iff it is a radially-symmetric function in the geodesic coordinate around the south pole. This $O(m)$-action is also extended to $N\times S^m$ commuting with the $G$-action. Thus we have a smooth left action of $G\times O(m)$ on $N\times S^m$.
\begin{Lemma}
Let $(N,h)$ be a smooth closed Riemannian $n$-manifold and $\rho$ be a smooth positive function on $N$.
If a compact Lie group $G$ acts on $N$ smoothly from the left and $(N\times S^m, [h+\rho^2g_{_{\Bbb S}}])$ admits a Yamabe minimizer invariant under the $G$-action,
then it also admits a Yamabe minimizer invariant under $G\times O(m)$ where $O(m)$ acts on $S^m$ standardly as above.
\end{Lemma}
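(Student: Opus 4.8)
The plan is to start from a $G$-invariant Yamabe minimizer $\varphi$ of $(N\times S^m,[h+\rho^2 g_{_{\Bbb S}}])$ — which exists by hypothesis and is smooth and positive — and to produce from it a minimizer that is in addition radially symmetric on the sphere factor, i.e. $O(m)$-invariant, without losing $G$-invariance. The instrument is the fiberwise spherical symmetrization $F\mapsto F_{\bar{*}}$ along the $S^m$-fibers. The one structural obstruction is that $F_{\bar{*}}$ is defined only for generic-fiberwise Morse functions and does not depend continuously on $F$, so $\varphi$ itself cannot be symmetrized directly; the argument must instead be run on Morse approximations of $\varphi$ and closed up by the compactness Proposition~\ref{yam}.

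First I would approximate and symmetrize. Since the induced $G$-action on $N\times S^m$ is trivial on the fiber $S^m$, Proposition~\ref{morse-1} supplies $G$-invariant generic-fiberwise Morse functions $\tilde\varphi_i\to\varphi$ in $C^2$, whence $Y_{h+\rho^2 g_{_{\Bbb S}}}(\tilde\varphi_i)\to Y(N\times S^m,[h+\rho^2 g_{_{\Bbb S}}])$. Specializing the chain of inequalities in the proof of Theorem~\ref{Yoon} to $M=S^m$, $g=g_{_{\Bbb S}}$, $V=V_m$ — so that $\mathrm{Ric}_g=m-1$, the prefactor $(V/V_m)^{2/(m+n)}$ equals $1$, $\textbf{h}_V=h$ and $\textsl{g}_V=g_{_{\Bbb S}}$ — the gradient inequalities (\ref{JHK1}), (\ref{JHK2}) together with the fiber identity (\ref{servant-now}) give
$$Y_{h+\rho^2 g_{_{\Bbb S}}}\big((\tilde\varphi_i)_{\bar{*}}\big)\ \le\ Y_{h+\rho^2 g_{_{\Bbb S}}}(\tilde\varphi_i).$$
As the right-hand side tends to $Y(N\times S^m,[h+\rho^2 g_{_{\Bbb S}}])$ while the left-hand side is bounded below by it, the symmetrized functions form a minimizing sequence once their $L^{p_{m+n}}$-norm is normalized to $1$ — harmless, since (\ref{servant-now}) shows symmetrization preserves that norm and $Y$ is scale-invariant. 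Each $(\tilde\varphi_i)_{\bar{*}}$ is radially symmetric on $S^m$, hence $O(m)$-invariant, and by the equivariance (\ref{Ocean-nuclear-1}) applied to $\mathfrak{g}\cdot\tilde\varphi_i=\tilde\varphi_i$ it is also $G$-invariant; so every term is $G\times O(m)$-invariant.

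Next I would pass to the limit. The $C^2$-bound yields $|\tilde\varphi_i|\le C$ uniformly, and because spherical rearrangement preserves the range of a function one also has $|(\tilde\varphi_i)_{\bar{*}}|\le C$; a constant function lying in $L^{p_{m+n}}(N\times S^m)$ therefore dominates the normalized sequence. Proposition~\ref{yam} then extracts a subsequence converging in $L_1^2$ to a Yamabe minimizer $\psi$. Since $G\times O(m)$ acts continuously on $L_1^2(N\times S^m)$ and fixes each member of the sequence, the limit $\psi$ is $G\times O(m)$-invariant, which is the assertion. Here $m+n\ge 3$, as required by Propositions~\ref{yam} and \ref{morse-1}; the remaining case $m+n=2$ is immediate from uniformization.

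The main obstacle I anticipate is coordinating three demands on the approximating sequence at once: genericity, so that $(\cdot)_{\bar{*}}$ is defined and (\ref{JHK1})–(\ref{JHK2}) apply; exact $G$-invariance, so that invariance survives symmetrization through (\ref{Ocean-nuclear-1}); and a single $L^{p_{m+n}}$-dominating function, so that Proposition~\ref{yam} applies. Genericity together with genuine $G$-invariance is available only because $G$ acts trivially on the fiber — this is exactly the hypothesis in Proposition~\ref{morse-1} that yields a $G$-invariant Morse approximation rather than a merely $G$-orbit-small one — and the domination is furnished by the uniform sup-norm bound from $C^2$-closeness, which rearrangement preserves. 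Verifying that $\{(\tilde\varphi_i)_{\bar{*}}\}$ remains minimizing under all of these constraints simultaneously is the delicate point.
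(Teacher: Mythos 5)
Your proposal is correct and follows essentially the same route as the paper: approximate the $G$-invariant minimizer by $G$-invariant generic-fiberwise Morse functions via Proposition~\ref{morse-1} (using that $G$ acts trivially on the $S^m$ factor), symmetrize to get a $G\times O(m)$-invariant minimizing sequence via the inequality from Theorem~\ref{Yoon}'s proof and the equivariance (\ref{Ocean-nuclear-1}), obtain the uniform $L^\infty$ bound from $C^2$-closeness, and close up with Proposition~\ref{yam}. The only cosmetic difference is at the last step, where the paper deduces invariance of the limit from a.e.\ pointwise convergence plus smoothness of the Yamabe minimizer, while you invoke continuity of the group action on $L_1^2$; both yield a.e.\ invariance, which smoothness upgrades to everywhere invariance.
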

\begin{proof}
%Since $[h+\rho^2\textsl{g}_V]=[(\frac{V_m}{V})^{\frac{2}{m}}h+\rho^2g_{_{\Bbb S}}]$ as a conformal class of $N\times S^m$, we will consider the metric
Set $\hat{g}:=h+\rho^2g_{_{\Bbb S}}$ and $p:=p_{m+n}$, and define a $C^2$-norm on $C^2(N\times S^m,\Bbb R)$ using any metric.

Let $\varphi$ be a minimizer of $Y_{\hat{g}}$, which is invariant under the $G$-action and satisfies $\int_{N\times S^m}\varphi^p\ d\mu_{\hat{g}}=1$. By Corollary \ref{morse-1}, for each $k\in \Bbb N$ we can choose a smooth $G$-invariant generic-fiberwise Morse function  $\varphi_k:N\times S^m\rightarrow \Bbb R$ such that $$||\varphi_k-\varphi||_{C^2}<\frac{1}{k},\ \ \ \ \textrm{and}\ \ \ \ Y_{\hat{g}}(\varphi_k)\leq Y_{\hat{g}}(\varphi)+\frac{1}{k}.$$
%\int_{N\times S^m}\varphi_k^p\ d\mu_{\hat{g}}=1.

From the proof of Theorem \ref{Yoon}, we have
\begin{eqnarray*}
Y_{\hat{g}}((\varphi_k)_{\bar{*}}) &\leq& Y_{\hat{g}}(\varphi_k)\\ &\leq&  Y_{\hat{g}}(\varphi)+\frac{1}{k}\\ &=& Y(N\times S^m,[\hat{g}])+\frac{1}{k}
\end{eqnarray*}
implying that $\{(\varphi_k)_{\bar{*}}| k\in \Bbb N\}$ is a minimizing sequence for $Y_{\hat{g}}$, and $\{\frac{(\varphi_k)_{\bar{*}}}{||(\varphi_k)_{\bar{*}}||_{L^p}}|k\in \Bbb N\}$ is also a minimizing sequence satisfying $$\int_{N\times S^m}\left(\frac{(\varphi_k)_{\bar{*}}}{ ||(\varphi_k)_{\bar{*}}||_{L^p}}\right)^p\ d\mu_{\hat{g}}=1.$$
We also have the $C^0$-bound of this minimizing sequence : $$\sup_{k\in \Bbb N}\frac{||(\varphi_k)_{\bar{*}}||_\infty}{||(\varphi_k)_{\bar{*}}||_{L^p}}=\sup_{k\in \Bbb N}\frac{||\varphi_{k}||_\infty}{||\varphi_{k}||_{L^p} }<\infty,$$ because all $\varphi_k$ are $C^2$-close to $\varphi$. Thus Proposition \ref{yam} asserts that there exists a subsequence of $\{\frac{(\varphi_k)_{\bar{*}}}{||(\varphi_k)_{\bar{*}}||_{L^p}}|k\in \Bbb N\}$ converging to a Yamabe minimizer $\varphi_{\infty}\in L_1^2(N\times S^m)$ in $L_1^2$-norm.
% and hence also pointwisely a.e. by taking a further subsequence.

We claim that $(\varphi_k)_{\bar{*}}$ for any $k$ is $G$-invariant. Indeed for any $\mathfrak{g}\in G$
\begin{eqnarray*}
(\varphi_k)_{\bar{*}}&=& (\mathfrak{g}\cdot\varphi_k)_{\bar{*}}\\ &=& \mathfrak{g}\cdot(\varphi_k)_{\bar{*}}
\end{eqnarray*}
by using (\ref{Ocean-nuclear-1}).

Since each $(\varphi_k)_{\bar{*}}$ is also $O(m)$-invariant commuting with the $G$-action, it is $(G\times O(m))$-invariant.
Thus the limit $\varphi_{\infty}$ which is the a.e. pointwise limit of a subsequence of $\{(\varphi_k)_{\bar{*}}|k\in \Bbb N\}$ is a.e. $(G\times O(m))$-invariant.  Since a Yamabe minimizer $\varphi_{\infty}$ is smooth, it has to be everywhere $(G\times O(m))$-invariant.
%ntinuity of , should be $(G\times O(m))$-invariant on the whole $N\times S^m$.
\end{proof}

Applying the above lemma inductively, one can get Theorem \ref{newsfactory}.
\begin{Remark}
In \cite{Sung} we have shown that the fiberwise spherical symmetrization on a warped product produces a smaller Rayleigh quotient, thereby giving a comparison theorem of the first eigenvalues. Thus one can apply the above method to the first eigenfunction of the Laplacian on the warped product with round $m$-sphere to obtain a radially-symmetric eigenfunction. Indeed one can obtain the followings :

Let $(N,h)$ and $\rho$ be as in the above lemma. If a compact Lie group $G$ acts on $N$ smoothly from the left and $(N\times S^m, h+\rho^2g_{_{\Bbb S}})$ admits a $G$-invariant first eigenfunction for $\Delta$, then it also admits a first eigenfunction invariant under $G\times O(m)$ where $O(m)$ acts on $S^m$ standardly as above.

For any smooth closed Riemannian manifold $(N,h)$, there exists a first eigenfunction of $(N\times \prod_{i=1}^kS^{m_i},h+\textsl{g}_{V_1}+\cdots +\textsl{g}_{V_k})$, which is radially symmetric in each $S^{m_i}$.
In fact, the latter statement can be also proved by using a general fact
$$\lambda_1(X\times M,h+g)=\min(\lambda_1(X,h),\lambda_1(M,g))$$ and the fact that there exists a radially-symmetric first eigenfunction on a round sphere of any dimension.(Indeed such a spherical harmonic on $\Bbb S^{m}$ is unique up to constant multiplication. It is given by the restriction of a linear function
$$(x_1,\cdots,x_{m+1})\cdot q_0=-x_{m+1}$$ of $\Bbb R^{m+1}$ to $\Bbb S^{m}=\{(x_1,\cdots,x_{m+1})\in \Bbb R^{m+1}|\sum_ix_i^2=1\}$.)

%Our method can be also applied to study the Yamabe problem on noncompact product manifolds such as $N\times \Bbb R^m$ and $N\times \Bbb H^m$.

\end{Remark}

\section{Yamabe invariant of $S^1\times M$}

Applying Theorem \ref{Yoon} to the case when $N$ is a circle and $\rho\equiv 1$, we have
\begin{Corollary}
Let $M$ be as in Theorem \ref{Yoon}. Then $$Y(S^1\times M)\geq \left(\frac{V}{V_m}\right)^{\frac{2}{m+1}}Y(S^{m+1}).$$
\end{Corollary}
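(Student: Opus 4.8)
The plan is to apply Theorem~\ref{Yoon} with $N=S^1$ and $\rho\equiv 1$ and then stretch the circle. For $L>0$ let $h_L$ be the metric on $S^1$ of total length $L$. Since $M$ has $\mathrm{Ric}_g\geq m-1$, Theorem~\ref{Yoon} gives, for every $L$,
$$Y(S^1\times M,[h_L+g])\ \geq\ \left(\frac{V}{V_m}\right)^{\frac{2}{m+1}}Y(S^1\times S^m,[h_L+g_{_{\Bbb S}}]).$$
As the Yamabe invariant is the supremum of the Yamabe constants over all conformal classes, the left-hand side is at most $Y(S^1\times M)$, so
$$Y(S^1\times M)\ \geq\ \left(\frac{V}{V_m}\right)^{\frac{2}{m+1}}Y(S^1\times S^m,[h_L+g_{_{\Bbb S}}])\qquad\textrm{for all }L>0.$$
Thus the corollary reduces to the claim
$$\lim_{L\to\infty}Y(S^1\times S^m,[h_L+g_{_{\Bbb S}}])\ =\ Y(S^{m+1}),$$
since letting $L\to\infty$ then turns the displayed inequality into the asserted one.

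To see why this limit holds I would use the cylinder model of the round sphere. Writing the flat metric on $\Bbb R^{m+1}\setminus\{0\}$ in polar coordinates as $dr^2+r^2g_{_{\Bbb S}}$ and substituting $r=e^t$ exhibits it as conformal to the product metric $dt^2+g_{_{\Bbb S}}$ on the cylinder $\Bbb R\times S^m$; composing with stereographic projection shows that the round metric on $S^{m+1}$ with two antipodal points deleted lies in the conformal class of $dt^2+g_{_{\Bbb S}}$. The closed manifold $(S^1\times S^m,h_L+g_{_{\Bbb S}})$ is precisely the quotient of this cylinder by the translation $t\mapsto t+L$, so its conformal class is the ``spherical'' one. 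The upper bound $Y(S^1\times S^m,[h_L+g_{_{\Bbb S}}])\leq Y(S^{m+1})$ is then immediate from Aubin's inequality in dimension $m+1$.

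The matching lower bound is the heart of the matter; it is the classical fact that stretching the circle drives the product Yamabe constant up to $Y(S^{m+1})$, equivalently the identity $Y(S^1\times S^m)=Y(S^{m+1})$ established by O. Kobayashi and R. Schoen. I would argue it by contradiction: were it to fail, there would be $\epsilon>0$ and $L_k\to\infty$ together with unit $L^{p_{m+1}}$ functions $u_k$ on $(S^1\times S^m,h_{L_k}+g_{_{\Bbb S}})$ whose Yamabe energy stays below $Y(S^{m+1})-\epsilon$. A concentration–compactness analysis on these growing cylinders forces either concentration at a point, whose blow-up is a standard bubble on $\Bbb R^{m+1}$ of energy $\geq Y(S^{m+1})$, or spreading governed by the full cylinder $\Bbb R\times S^m$, whose Sobolev quotient infimum equals $Y(S^{m+1})$ because the two deleted points carry zero capacity; either alternative contradicts the assumed $\epsilon$-deficit. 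This lower bound is the main obstacle, the delicate point being to control the constant scalar-curvature term $s_{h_L+g_{_{\Bbb S}}}=m(m-1)$ against the gradient and $L^{p_{m+1}}$ norms uniformly as $L\to\infty$; granting it, the reduction in the first paragraph finishes the proof routinely.
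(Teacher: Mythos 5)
Your proof is correct and follows essentially the same route as the paper: apply Theorem~\ref{Yoon} with $N=S^1$ of length $L$ and $\rho\equiv 1$, bound $Y(S^1\times M)$ below by the conformal Yamabe constant, and let $L\to\infty$ using the fact that $Y(S^1\times S^m,[h_L+g_{_{\Bbb S}}])\to Y(S^{m+1})$, which the paper simply cites as R.~Schoen's theorem \cite{Schoen2}. Your concluding concentration--compactness sketch of that limit is not needed (and is only an outline, not a proof), since the limit is a quotable classical result, exactly as you yourself note.
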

\begin{proof}
Applying Theorem \ref{Yoon} with $(N,h)=(S^1_T,\textsl{g}_T)$,
\begin{eqnarray*}
Y(S^1\times M)&\geq& Y(S^1\times M,[\textsl{g}_T+g])\\
&\geq& \left(\frac{V}{V_m}\right)^{\frac{2}{m+1}}Y(S^1\times S^m,[\textsl{g}_T+g_{_{\Bbb S}}]).
\end{eqnarray*}
Letting $T\rightarrow \infty$, the desired conclusion follows from R. Schoen's theorem \cite{Schoen2}
$$\lim_{T\rightarrow \infty}Y(S^1\times S^m,[\textsl{g}_T+g_{_{\Bbb S}}])=Y(S^{m+1}).$$
\end{proof}
\begin{Remark}
Petean \cite{Petean-2} already obtained this corollary by a different method which is also based on the isoperimetric inequality.
\end{Remark}

When $(M,g)$ is Einstein, Theorem \ref{Yoon} can be also expressed as
\begin{Corollary}\label{bestform}
Let $M$ and $N$ be as in Theorem \ref{Yoon}, and further suppose that $Ric_g=m-1$. Then  $$Y(N\times M,[h+\rho^2g]) \geq \left(\frac{Y(M,[g])}{Y(S^m)}\right)^{\frac{m}{m+n}}Y(N\times S^m,[h+\rho^2g_{_{\Bbb S}}])$$
and
$$Y(S^1\times M)\geq \left(\frac{Y(M,[g])}{Y(S^m)}\right)^{\frac{m}{m+1}}Y(S^{m+1}).$$
\end{Corollary}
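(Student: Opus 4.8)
The plan is to recognize this corollary as a purely algebraic rewriting of Theorem \ref{Yoon} and its preceding corollary, made possible by the explicit value of the Yamabe constant of an Einstein manifold. Under the hypothesis $Ric_g = m-1$, taking the trace gives $s_g = m(m-1)$, and by Obata's theorem \cite{obata} the Einstein metric $g$ is itself a Yamabe metric, so its Yamabe constant can be read off directly rather than through a variational argument.

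First I would compute $Y(M,[g])$ explicitly. Since $g$ has constant scalar curvature $m(m-1)$ and is already a Yamabe minimizer,
$$Y(M,[g]) = \frac{\int_M s_g \, d\mu_g}{V^{\frac{m-2}{m}}} = m(m-1)\,V^{\frac{2}{m}}.$$
The identical computation on $(S^m, g_{_{\Bbb S}})$, whose conformal class realizes the Yamabe invariant by Aubin's inequality \cite{Aubin}, gives $Y(S^m) = m(m-1)\,V_m^{\frac{2}{m}}$. This is precisely the content already recorded in Remark \ref{unicef}, so I may simply quote it rather than rederive it.

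Dividing these two identities yields
$$\frac{Y(M,[g])}{Y(S^m)} = \left(\frac{V}{V_m}\right)^{\frac{2}{m}},$$
and raising both sides to the power $\frac{m}{m+n}$ converts the geometric prefactor of Theorem \ref{Yoon} into the conformal prefactor appearing in the corollary:
$$\left(\frac{Y(M,[g])}{Y(S^m)}\right)^{\frac{m}{m+n}} = \left(\frac{V}{V_m}\right)^{\frac{2}{m+n}}.$$
Substituting this equality into the conclusion of Theorem \ref{Yoon} gives the first inequality at once, and substituting it with $n=1$ into the $S^1\times M$ corollary proved above gives the second.

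There is no genuine analytic obstacle here; the entire argument is a substitution. The only points requiring care are the bookkeeping of exponents---confirming that $\frac{2}{m}\cdot\frac{m}{m+n} = \frac{2}{m+n}$---together with the convention that $Y(S^m)$ denotes the Yamabe invariant, which coincides with $Y(S^m,[g_{_{\Bbb S}}])$ precisely because the round conformal class saturates Aubin's inequality.
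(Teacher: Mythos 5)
Your proposal is correct and takes essentially the same route as the paper: the paper likewise quotes Remark \ref{unicef} (Obata's theorem making the Einstein metric a Yamabe metric, so $Y(M,[g])=m(m-1)V^{\frac{2}{m}}=\left(\frac{V}{V_m}\right)^{\frac{2}{m}}Y(S^m)$), then substitutes $\frac{V}{V_m}=\left(\frac{Y(M,[g])}{Y(S^m)}\right)^{\frac{m}{2}}$ into Theorem \ref{Yoon} and the $S^1\times M$ corollary. The exponent bookkeeping you carried out matches the paper's substitution exactly.
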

\begin{proof}
When $Ric_g=m-1$, by Remark \ref{unicef} we have
$$Y(M,[g])={\left( \frac{ V}{V_m} \right) }^{\frac{2}{m}} Y(S^m,[g_{_{\Bbb S}}])={\left( \frac{ V}{V_m} \right) }^{\frac{2}{m}} Y(S^m).$$
Plugging $\frac{ V}{V_m}=\left( \frac{Y(M,[g])}{ Y(S^m)} \right)^{\frac{m}{2}}$ into the inequality of Theorem \ref{Yoon} and the above corollary gives the desired inequalities.
\end{proof}

\begin{Example}
It would be helpful to illustrate it by an example here.
As is well-known, the Fubini-Study metric $g_{_{FS}}$ of $\Bbb CP^2$ is an Einstein metric of positive Ricci curvature and in fact it realizes $Y(\Bbb CP^2)=12\sqrt{2}\pi$  by the result of LeBrun \cite{LeBrun}.

Letting $a>0$ be the constant such that the Ricci curvature of $a^2g_{_{FS}}$ is 3,
% and $V$ be the volume of $(\Bbb CP^2,a^2g_{FS})$. %Applying Corollary \ref{bestform} and Schoen's theorem again,
we have
\begin{eqnarray*}
Y(S^1\times \Bbb CP^2)&\geq& \lim_{T\rightarrow \infty}Y(S^1\times\Bbb CP^2,[\textsl{g}_T+a^2g_{_{FS}}])\\
&\geq&  \left(\frac{Y(\Bbb CP^2,[a^2g_{_{FS}}])}{Y(S^4)}\right)^{\frac{4}{5}}\lim_{T\rightarrow \infty}Y(S^1\times S^4,[\textsl{g}_T+g_{_{\Bbb S}}])     \\ &=&  \left(\frac{Y(\Bbb CP^2,[g_{_{FS}}])}{Y(S^4)}\right)^{\frac{4}{5}}Y(S^{5})\\ %&=& \left(\frac{12\sqrt{2}\pi}{8\sqrt{6}\pi}\right)^{\frac{4}{5}}Y(S^{5})\\
&=& \left(\frac{9}{16}\right)^{\frac{1}{5}}Y(S^5).
\end{eqnarray*}
It is an interesting question whether $Y(\Bbb CP^2\times S^1)$ is equal to $\left(\frac{9}{16}\right)^{\frac{1}{5}}Y(S^5)$ or not.
\end{Example}

\section{Optimal volume sphere theorem}
%\subsection{Miscellaneous byproducts}
As another application of Theorem \ref{Yoon}, we can give a different proof of the 3-dimensional case of the optimal volume sphere theorem problem \cite{Petersen},
which was proved by Hamilton's theorem \cite{hamilton} using the Ricci flow.
\begin{Theorem}\label{Chung}
Let $(M,g)$ be a smooth closed Riemannian 3-manifold with $\textrm{Ric}_g  \geq 2$ and volume $V> \frac{V_3}{2}$.
Then $M$ is diffeomorphic to $S^3$.
\end{Theorem}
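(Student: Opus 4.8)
The plan is to deduce the theorem from the special case $N=\{pt\}$ of Theorem~\ref{Yoon} recorded in Remark~\ref{unicef}, combined with a sharp upper bound for the Yamabe invariant of non-spherical $3$-manifolds. First I would pin down the topology of $M$. Since $\textrm{Ric}_g\geq 2>0$, Myers' theorem shows that $\pi_1(M)$ is finite; because a free product of two nontrivial groups is infinite, the prime decomposition then forces $M$ to be prime, and in particular $M$ is not $S^2\times S^1$, whose fundamental group is infinite. Moreover $M$ is orientable: otherwise its orientation double cover would carry a pulled-back metric with $\textrm{Ric}\geq 2$ and volume $2V>V_3$, contradicting the Bishop--Gromov bound $\textrm{Vol}\leq V_3$.

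Next I would extract a lower bound for the Yamabe invariant directly from Theorem~\ref{Yoon}. Taking $N$ to be a point and $\rho\equiv 1$ (the inequality of Remark~\ref{unicef}) with $m=3$ gives
$$Y(M,[g])\geq \left(\frac{V}{V_3}\right)^{\frac{2}{3}}Y(S^3).$$
Since $V>\tfrac{V_3}{2}$, the prefactor strictly exceeds $2^{-2/3}$, so that
$$Y(M)\geq Y(M,[g])> 2^{-\frac{2}{3}}Y(S^3)=Y(\mathbb{RP}^3),$$
the last equality being the value of the Yamabe invariant of $\mathbb{RP}^3$ computed by Bray and Neves.

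Finally I would invoke the Bray--Neves classification: among closed oriented prime $3$-manifolds, the only ones whose Yamabe invariant strictly exceeds $Y(\mathbb{RP}^3)$ are $S^3$ and $S^2\times S^1$. By the first paragraph $M$ is oriented, prime, and distinct from $S^2\times S^1$, so the strict inequality $Y(M)>Y(\mathbb{RP}^3)$ leaves only the possibility $M\cong S^3$, as desired.

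The hard part is precisely the upper bound on $Y(M)$ for $M\neq S^3$: Theorem~\ref{Yoon} converts the volume hypothesis into the lower bound effortlessly, but promoting this to a rigidity statement rests entirely on the deep Bray--Neves classification (equivalently, a sharp computation of the Yamabe invariants of spherical space forms). I would also note why the more obvious route through $S^1\times M$ via the Corollary of Section~5 stalls at the same place: that corollary yields only the lower bound $Y(S^1\times M)\geq (V/V_3)^{1/2}Y(S^4)$, and there is no elementary covering estimate producing the matching upper bound $Y(S^1\times(S^3/\Gamma))\leq |\Gamma|^{-1/2}Y(S^4)$, since pulling test functions back to a finite cover bounds the conformal invariants of the cover from above rather than the Yamabe invariant of the base.
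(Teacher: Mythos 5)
Your proposal is correct, and its analytic core is identical to the paper's: apply Theorem \ref{Yoon} with $N$ a point and $\rho\equiv 1$ to convert the hypothesis $V>\tfrac{V_3}{2}$ into $Y(M)\geq Y(M,[g])>2^{-2/3}Y(S^3)$, then finish with a deep classification of $3$-manifolds of large Yamabe invariant plus Myers. The difference lies in the topological bookkeeping. The paper cites Akutagawa--Neves \cite{Neves}, whose theorem applies to \emph{arbitrary} closed $3$-manifolds and yields $M\cong S^3\#\,j(S^2\times S^1)\#\,k(S^2\tilde{\times}S^1)$; finiteness of $\pi_1(M)$ then forces $j=k=0$ in one line, with no discussion of primality or orientability. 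You instead invoke the Bray--Neves classification of \emph{prime} oriented $3$-manifolds, which obliges you to establish first that $M$ is prime and orientable. Your orientability argument via Bishop--Gromov on the orientation double cover is correct (alternatively, the non-orientable entry $S^2\tilde{\times}S^1$ in Bray--Neves's full list also has infinite $\pi_1$, so Myers would dispose of it directly). The one soft spot is the primality step: ``$\pi_1(M)$ finite plus prime decomposition implies $M$ prime'' requires knowing that the simply-connected prime summands are genuine copies of $S^3$, i.e., it invokes the Poincar\'e conjecture; a priori $M$ could be a connected sum with homotopy spheres and hence fail to be prime. This is harmless post-Perelman, but it is an unstated dependency, and it is precisely the subtlety that the paper's choice of the Akutagawa--Neves statement (which absorbs the non-prime case, and of which Bray--Neves is the prime-manifold predecessor) sidesteps. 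Net effect: same key inequality and same circle of classification results, with your route trading a stronger citation for extra, and slightly leakier, topological preprocessing.
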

\begin{proof}
This time let's apply Theorem \ref{Yoon} with $N$ equal to a point and $\rho\equiv 1$, and we get
\begin{eqnarray*}
Y(M,[g])&\geq& \left(\frac{V}{V_3}\right)^{\frac{2}{3}}Y(S^3_V,[\textsl{g}_V])\\
&>& \frac{Y(S^3_V,[\textsl{g}_V])}{2^{\frac{2}{3}}}\\
&=& \frac{Y(S^3)}{2^{\frac{2}{3}}}
\end{eqnarray*}
by the condition that $V> \frac{V_3}{2}$. By Akutagawa and Neves's theorem \cite{Neves}, any closed 3-manifold with Yamabe invariant greater than $\frac{Y(S^3)}{2^{\frac{2}{3}}}$ must be diffeomorphic to a connected sum $S^3\# j(S^2\times S^1)\# k(S^2\tilde{\times} S^1)$ for some $j,k\geq 0$, where  $S^2\tilde{\times} S^1$ is the non-orientable $S^2$-bundle over $S^1$. Since the fundamental group of $M$ is finite by Myers's theorem, $M$ must be $S^3$.
\end{proof}
%we have the following immediate applications. First, one can give a short proof of a special case of T. Colding's theorem \cite{Colding} which states that for any $\varepsilon>0$ there exists a $\delta(n,\varepsilon)>0$ such that any $n$-manifold $(M ,g)$ with  $Ric_g \geq n-1$ and volume $V\geq V_m-\delta$ satisfies that the Gromov-Hausdorff distance between $(M ,g)$ and $(S^m, g_{_{\Bbb S}})$ is at most $\varepsilon$.

\bigskip

\noindent{\bf Declarations}

\medskip

\noindent{\bf Data Availability} : Data sharing is not applicable to this article as no new data were created or analyzed in this study.

\noindent{\bf Conflicts of Interest} : The author has no relevant financial or non-financial interests to disclose.

\noindent{\bf Funding} : No funding has been provided for this study.

\bigskip

\vspace{0.5cm}


\begin{thebibliography}{aa}

\bibitem{AFP}  K. Akutagawa, L. Florit, and J. Petean, {\it On Yamabe constants of Riemannian products}, Comm. Anal. Geom. {\bf 15} (2007), 947-969.
   .

\bibitem{Neves} K. Akutagawa and A. Neves, {\it 3-manifolds with Yamabe invariant greater than that of $\Bbb R\Bbb P^3$}, J. Diff. Geom. {\bf 75} (2007), 359-386.

\bibitem{ADH}  B. Ammann, M. Dahl, and E. Humbert, {\it The conformal Yamabe constant of product manifolds}, Proc. of AMS. {\bf 141} (2013), 295-307.


\bibitem{Aubin} T. Aubin, {\it \'{E}quations diff\'{e}rentielles non lin\'{e}aires et probl\`{e}me de Yamabe concernant la courbure scalaire}, J. Math. Pures Appl. {\bf 55} (1976), 269-296.

\bibitem{Aubin-0} T. Aubin, {\it Probl\`{e}mes isop\'{e}rim\'{e}triques et espaces de Sobolev}, J. Diff. Geom. {\bf 11} (1976), 573-598.

\bibitem{Aubin-1} T. Aubin, {\it Some nonlinear problems in Riemannian geometry}, Springer Monogrphs in Mathematics, Springer-Verlag, New York, 1998.

%\bibitem{Beber} L. B\'{e}rard Bergery, {\it Sur certaines fibrations d'espaces homogene riemanniens}, Composito Math. {\bf 30} (1975), 43–61.

%\bibitem{Bergery-Bour} L. B\'{e}rard Bergery and J. P. Bourguignon, {\it Laplacians and  Riemannian submersions with totally geodesic fibers}, Ill. J. Math. {\bf 26} (1982), 181-200.

\bibitem{Besse} A. Besse, {\it Einstein manifolds}, Springer-Verlag, New York, 1987.

%\bibitem{Besson} G. Besson and M. Bordoni, {\it On the spectrum of Riemannian submersions with totally geodesic fibers}, Atti della Accademia Nazionale dei Lincei. Classe di Scienze Fisiche, Matematiche e Naturali. Rendiconti Lincei. Matematica e Applicazioni {\bf 1} (1990), 335–340.

%\bibitem{Bol} G. Bol, {\it Isoperimetrische ungleichungen f\"ur bereiche auf fl\"achen}, Jber. Deutsch. Math. Verein. {\bf 51} (1941), 219–257.

%\bibitem{Gallot} P. B\'{e}rard, G. Besson, and S. Gallot, {\it Sur une in\'{e}galit\'{e} isop\'{e}rim\'{e}trique qui g\'{e}n\'{e}ralise celle de Paul L\'{e}vy-Gromov}, Invent. Math. {\bf 80} (1985), 295-308.

%\bibitem{Chavel} I. Chavel, {\it Eigenvalues in Riemannian geometry}, Academic Press, Orlando, 1984.

%\bibitem{Cheng} S. Y. Cheng, {\it Eigenvalue comparison theorems and its geometric applications}, Math. Z. {\bf 143} (1975), 289-297.

%\bibitem{Cheng2} S. Y. Cheng, {\it Eigenfunctions and eigenvalues of Laplacian}, Amer. Math. Soc. Proc. Symp. Pure Math. {\bf 27} (1975), 185-193.


%\bibitem{croke2} C. Croke, {\it A sharp four dimensional isoperimetric inequality}, Comment. Math. Helv. {\bf 59} (1984), 187-192.

%\bibitem{crowley} D. Crowley and C. Escher, {\it A classification of $S^3$-bundles over $S^4$}, Diff. Geom. Appl. {\bf 18} (2003), 363-380.

%\bibitem{Escobales-1} R. Escobales, {\it Riemannian submersions with totally geodesic fibers}, J. Diff. Geom. {\bf 10} (1975) 253–276.

%\bibitem{Escobales-2} R. Escobales, {\it Riemannian submersions from complex projective spaces}, J. Diff. Geom. {\bf 13} (1978) 93–107.

%\bibitem{Evans} L. Evans and R. Gariepy, {\it Measure theory and fine properties of functions}, CRC Press, 1992.


%\bibitem{Ejiri} N. Ejiri, {\it A construction of non-flat, compact irreducible Riemannian manifolds which are isospectral but not isometric}, Math. Z. {\bf 168} (1979), 207-212.

%\bibitem{Faber} G. Faber, {\it Beweis, dass unter allen homogenen Membranen von gleicher Fl\"ache und gleicher Spannung die kreisf\"ormige den tiefsten Grundton gibt}, Sitzungsber. Bayer. Akad. der Wiss. Math.-Phys., Munich, 1923, 169-172.

\bibitem{Gamera} N. Gamera, S. Eljazi, and H. Guemri, {\it Ricci curvature and conformality of Riemannian manifolds to spheres}, Advances in Pure and Applied Math. {\bf 1} (2010), 35-46.

%\bibitem{GT} D. Gilbarg and N. Trudinger, {\it Elliptic partial differential equations of second order}, 2nd ed., Springer-Verlag,  New York, 1983.

%\bibitem{Gilkey-Park} P. B. Gilkey and J. H. Park, {\it Riemannian submersions which preserves the eigenforms of the Laplacian}, Ill. J. Math. {\bf 40} (1996), 194-201.

%\bibitem{pollack} V. Guillemin and A. Pollack, {\it Differntial topology}, AMS, Providence, 2010.

\bibitem{Gursky} M. Gursky, C. LeBrun, {\it Yamabe invariants and Spin$^c$-structures}, Geom. Funct. Anal. {\bf 8} (1998), 965-977.

\bibitem{hamilton} R. Hamilton, {\it Three manifolds with positive Ricci curvature}, J. Diff. Geom. {\bf 17} (1982), 255-306.

%\bibitem{Henrot} A. Henrot, {\it Minimization problems for eigenvalues of the Laplacian}, J. Evol. Equ. {\bf 3} (2003), 443-461.

%\bibitem{Hermann} R. Hermann, {\it A sufficient condition that a mapping of Riemannian manifolds be a fibre bundle}, Proc. AMS {\bf 11} (1960), 236-242.

\bibitem{Ilias} S. Ilias, {\it Constantes explicites pour les in\'{e}galites de Sobolev sur les vari\'{e}t\'{e}s riemanniennes compactes}, Ann. Inst. Fourier (Grenoble), {\bf 33} no. 2 (1983), 151-165.

%\bibitem{Kleiner} B. Kleiner, {\it An isoperimetric comparison theorem}, Invent. Math. {\bf 108} (1992), 37–47.

\bibitem{Kobayashi} O. Kobayashi, {\it Scalar curvature of a metric with unit volume}, Math. Ann. {\bf 279} (1987), 253-265.

%\bibitem{Krahn} E. Krahn, {\it \"Uber eine von Rayleigh formulierte Minimaleigenschaft des Kreises}, Math. Ann. {\bf 94} (1925), 97-100.

\bibitem{LeBrun} C. LeBrun, {\it Yamabe constants and the perturbed Seiberg-Witten equations}, Comm. Anal. Geom. {\bf 5} (1997), 535-553.

\bibitem{L&P} J. Lee and T. Parker, {\it The Yamabe problem}, Bull. Amer. Math. Soc. {\bf 17} (1987), no. 1, 37-91.

%\bibitem{Lich} A. Lichnerowicz, {\it G\'{e}om\'{e}trie des groupes de transformations}, Dunod, Paris, 1958.

%\bibitem{Mao} J. Mao, R. Tu, and K. Zeng, {\it Eigenvalue estimates for submanifolds in Hadamard manifolds and product manifolds $N\times \Bbb R$}, Hiroshima Math. J. {\bf 50} (2020), 17–42.

%\bibitem{Mazja} V. Maz’ja, {\it Sobolev spaces}, Springer-Verlag, 1985.

%\bibitem{mckean} H. P. Mckean, {\it An upper bound for the spectrum of $\Delta$ on a manifold of negative curvature}, J. Diff. Geom. {\bf 4} (1970), 359–366.

%\bibitem{Milnor} J. Milnor, {\it Lectures on $h$-cobordism theorem}, Princeton University Press, Princeton, 1965.

%\bibitem{morrey} C. Morrey, {\it Multiple integrals in the calculus of variations}, 2nd ed., Springer-Verlag, New York, 1966.

%\bibitem{obata-1962} M. Obata, {\it Certain conditions for a Riemannian manifold to be isometric with a sphere}, J. Math. Soc. Japan {\bf 14} (1962), 333-340.

\bibitem{obata} M. Obata, {\it The conjecture on conformal transformations of Riemannian manifolds}, J. Diff. Geom. {\bf 6} (1971), 247-258.

\bibitem{ON} B. O'Neill, {\it Semi-Riemannian geometry}, Academic Press, New York, 1983.

\bibitem{Petean-1} J. Petean, {\it Best Sobolev constants and manifolds with positive scalar curvature metrics}, Ann. Glob. Anal. Geom. {\bf 20} (2001), 231-242.

\bibitem{Petean} J. Petean, {\it Ricci curvature and Yamabe constants}, arXiv:math/0510308.

\bibitem{Petean-2} J. Petean, {\it Isoperimetric regions in sphereical cones and Yamabe constants of $M\times S^1$}, Geom. Dedicata {\bf 143} (2009), 37-48.

\bibitem{Petean-3} J. Petean, {\it Metrics of constant scalar curvature conformal to Riemannian products},  Proc. of AMS. {\bf 138} (2010), 2897-2905.

\bibitem{jimmy-hector} J. Petean and H. B. Gonz\'alez, {\it A note on solutions of Yamabe-type equations on products of spheres}, Proc. of AMS. {\bf 147} (2019),  3143-3153.

\bibitem{ruiz1} J. Petean and J. Ruiz, {\it Isoperimetric profile comparisons and Yamabe constants}, Ann. Global Anal. Geom. {\bf 40} (2011), 177-189.

\bibitem{ruiz2} J. Petean and J. Ruiz, {\it On the Yamabe constants of $S^2\times \Bbb R^3$ and $S^3\times \Bbb R^2$}, arXiv:1202.1022.

\bibitem{Petersen} P. Petersen, {\it Comparision geometry problem list}, In: Riemannian geometry (Fields Institute Monographs 4), AMS. Providence, 1996.

%\bibitem{Ranjan} A. Ranjan, {\it Riemannian submersions of spheres with totally geodesics fibers}, Osaka J. Math. {\bf 22} (1985) 243–260.

%\bibitem{Ros} A. Ros, {\it The isoperimetric problem}, Global theory of minimal surfaces (Proc. Clay Math. Inst. Summer School 2001), AMS. Providence (2005), 175-207.


\bibitem{Schoen2} R. Schoen, {\it Variational theory for the total scalar curvature functional for Riemannian metrics and related topics}, Lectures Notes in Math. 1365, Springer, Berlin (1987), 120-154.

\bibitem{SY} R. Schoen and S.-T. Yau, {\it Lectures on differential geometry}, International Press, Boston, 1994.


\bibitem{Sung-weyl} C. Sung, {\it The Weyl functional on 4-manifolds of positive Yamabe invariant}, Ann. Glob. Anal. Geom. {\bf 60} (2021), 767–805.

\bibitem{Sung} C. Sung, {\it Fiberwise symmetrizations for variational problems on fibred manifolds},  J. Geom. Anal. {\bf 34}, 254 (2024).
%arXiv:2108.12651 [math.DG].


\bibitem{Sung-Takeuchi} C. Sung and Y. Takeuchi, {\it The CR Yamabe constant and inequivalent CR structures}, Pacific J. Math. {\bf 335} (2025), No. 1, 163–181.


%\bibitem{Tani} Y. Taniguchi, {\it Normal homogeneous metrics and their spectra}, Osaka J. Math. {\bf 18} (1981), 555-576.

%\bibitem{Tanno} S. Tanno, {\it Some metrics on a $(4r+3)$-sphere and spectra}, Tsukuba J. Math. {\bf 4} (1980), 99-105.

%\bibitem{Tsukada} K. Tsukada, {\it Eigenvalues of the Laplacian of warped product}, Tokyo J. Math. {\bf 3} (1980), 131-136.

%\bibitem{Vilms} J. Vilms, {\it Totally geodesic maps}, J. Diff. Geom. {\bf 4} (1970), 73-79.

%\bibitem{Watson} B. Watson, {\it Manifold maps commuting with the Laplacian}, J. Diff. Geom. {\bf 8} (1973), 85-94.

%\bibitem{Weil} A. Weil, {\it Sur les surfaces \`{a} courbure negative}, C. R. Acad. Sci. Paris {\bf 182} (1926), 1069-1071.

%\bibitem{ziemer} W. P. Ziemer, {\it Weakly differentiable functions}, Springer-Verlag, New York, 1989.

\end{thebibliography}
\end{document}